\documentclass[11pt]{amsart}
\usepackage{amsmath,a4wide,scalerel}
\usepackage{mathabx}
\usepackage{xcolor,graphicx}
\usepackage{mathrsfs}
\usepackage{pgfplots}
\usepackage{subcaption}
\usepackage[normalem]{ulem}
\usepackage{float}
\usepackage{ctable} 

\usepackage{hyperref}
\newtheorem{theorem}{Theorem}
\newtheorem{lemma}[theorem]{Lemma}
\newtheorem{proposition}[theorem]{Proposition}

\newtheorem{corollary}[theorem]{Corollary}

\newtheorem{assumption}{Assumption}

\newcommand{\R}{{\mathbb R}}
\newcommand{\N}{{\mathbb N}}
\newcommand{\E}{{\mathbb E}}

\definecolor{dkgreen}{rgb}{0,0.6,0}

\newcommand{\Lg}{{\rm L_g}} 

\newcommand{\LT}{{\rm LT}}

\usepackage{color}
\usepackage{dsfont}

\title[]{Analysis of a positivity-preserving splitting scheme for some nonlinear stochastic heat equations}

\date{\today}

\author{Charles-Edouard Br\'ehier}
			\address{Universit\'e de Pau et des Pays de l'Adour, E2S UPPA, CNRS, LMAP, Pau, France}
			\email{charles-edouard.brehier@univ-pau.fr}

\author{David Cohen}
              \address{Department of Mathematical Sciences,
              Chalmers University of Technology and University of Gothenburg, 41296~Gothenburg, Sweden}
              \email{\tt david.cohen@chalmers.se}

\author{Johan Ulander}
              \address{Department of Mathematical Sciences,
              Chalmers University of Technology and University of Gothenburg, 41296~Gothenburg, Sweden}
              \email{\tt johanul@chalmers.se}

\begin{document}

\begin{abstract}
We construct a positivity-preserving Lie--Trotter splitting scheme with finite difference discretization in space
for approximating the solutions to a class of nonlinear stochastic heat equations with multiplicative space-time white noise.
We prove that this explicit numerical scheme converges in the mean-square sense, with rate $1/4$ in time and rate $1/2$ in space,
under appropriate CFL conditions. Numerical experiments illustrate the superiority of
the proposed numerical scheme compared with standard numerical methods which do not preserve positivity.
\end{abstract}

\maketitle
{\small\noindent
{\bf AMS Classification.} 60H35. 60M15. 65J08.

\bigskip\noindent{\bf Keywords.} Stochastic partial differential equations.
Stochastic heat equation. Splitting scheme. Positivity-preserving scheme. Mean-square convergence.

\vspace{1cm}

\section{Introduction}\label{sec:intro}

Starting with the seminal work \cite{MR1341554} on an implicit scheme
for stochastic quasi-linear parabolic partial differential equations in $1995$,
the field of numerical analysis of stochastic partial differential equations (SPDEs)
has gained a huge interest during the last decades. We refer the interested readers
to \cite{MR876085,MR3236753,MR3222416,MR1500166,MR3308418} for references on the theory of SPDEs
and to \cite{MR1699161,MR1683281,MR1803132,MR1873517,MR1953619,MR2211047,MR2135265,MR2136207,MR2147242,MR2182132,MR2300300,MR2334046,MR2471778,MR2578878,MR2600932,
MR2891223,MR2916876,MR3027891,MR3011387,MR3101829,MR3047942,MR3308418,MR3154916,MR3364862,MR3369398,MR3327065,MR3649432,MR3761284,MR3852620,MR3906826,
MR4050540,MR4112639,MR4092279,MR4147698,MR4221304,MR4489191,bdg21,bnsz22}  for
references on the numerical analysis of SPDEs (with a particular focus on
works related to strong convergence for parabolic SPDEs).

In this work we propose and study a novel positivity-preserving numerical scheme for a fully discrete approximation of the following nonlinear Stochastic Heat Equation (SHE)
with multiplicative space-time white noise
\begin{equation}\label{eq:spde-intro}
\left\lbrace
\begin{aligned}
&\partial_t u(t,x) = \partial_{xx}^2 u(t,x) + g(u(t,x))\dot{W}(t,x),\\
&u(t,0)=u(t,1)=0,\\
&u(0,x) = u_{0}(x),
\end{aligned}
\right.
\end{equation}
for $(t,x) \in [0,T] \times [0,1]$ and where $u_{0} \geq 0$ is continuous, $g \colon \mathbb{R} \to \mathbb{R}$ is globally Lipschitz continuous, of class $\mathcal{C}^1$ and satisfies $g(0)=0$, and $\dot{W}$ is a space-time white noise, see Section~\ref{sec:setting} for precise definitions and assumptions.
Taking $g(x)=x$ in equation~\eqref{eq:spde-intro} results in the celebrated parabolic Anderson model, see for instance \cite{MR1185878}.
This equation is used to model (particle) branching processes, hydrodynamics with random forcing, and serves as a model for turbulent diffusions.

The positivity-preserving property of the exact solutions to the SPDE~\eqref{eq:spde-intro} is the subject of extensive research:
two of the first results in this direction can be found in \cite{MR1149348,MR1271224}, where this property is proven to be true for noise of the form $u^{\gamma} \dot{W}$ (where $1 \leq \gamma<3/2$)
and for a nonlinearity that is of at most linear growth. The case of a Lipschitz nonlinearity $g$ is studied in, for example, \cite{MR1500166,Ryzhik,MR3262487}.
For the sake of completeness, we mention the paper \cite{MR1443138} on positivity of SHE with random initial conditions, the paper \cite{MR1644618}
on problems with spatially homogeneous Wiener process, the paper \cite{MR3606745} on the stochastic
fractional heat equation, the paper \cite{MR3916940} on problems in $\mathbb{R}^n$, as well as the paper
\cite{MR3069916} on systems of SHEs with a spatially correlated noise. Note that these references are considering the space domain to be $\mathbb{R}$ or $\mathbb{R}^n$.
To the best of our current knowledge, there are no corresponding results for the case of compact domains with homogeneous Dirichlet boundary conditions.

While standard time integrators for SPDEs, such as the Euler--Maruyama scheme \cite{MR1803132},
the semi-implicit Euler--Maruyama scheme \cite{MR1699161}, and the stochastic exponential Euler integrator \cite{MR3047942}
do converge when applied to the problem~\eqref{eq:spde-intro}, they do not preserve the positivity property of the exact solution.
Note that the semi-implicit Euler scheme and the exponential Euler integrator preserve positivity in the
deterministic case ($g \equiv 0$ in equation~\eqref{eq:spde-intro}).

In this work, we employ a splitting strategy for the time integration of the SPDE~\eqref{eq:spde-intro}.
This results in an efficient and positivity-preserving explicit time integrator.
In essence, a splitting integrator decomposes the vector field of the
original evolution equation in several parts, such that the arising subsystems are exactly integrated (or easily). Splitting schemes have been extensively studied and successfully applied to deterministic differential equations, see for instance \cite{MR2840298,MR3642447,MR2009376} and references therein. Splitting schemes are also very popular for an efficient time discretization of stochastic (partial) differential equations.
We refer the reader to the following non-exhaustive list of articles:  \cite{m06,MR2646103,MR3021492,MR3119724,MR3617573,MR3607207,MR3736651,MR3912762,MR3839068,MR4019051,MR4132896,MR4263224,MR4278943,bc20,MR4400428,bcg22}.

The preservation of positivity by numerical methods have been investigated in several references in both the deterministic and stochastic settings.
Without being exhaustive, we mention the following articles on positivity-preserving schemes for stochastic differential equations:
\cite{MR3060559,MR2806537,MR3331648,MR4242953,Abiko,MR4335122,MR4475995,MR4489741}.
Finally, let us mention the recent reference \cite{MR4449543} on a positivity-preserving numerical scheme
for the linear stochastic heat equation with finite dimensional noise. We are not aware of works on
the numerical analysis of positivity-preserving schemes for SPDEs driven by space-time white noise.

The fully-discrete Lie--Trotter splitting scheme, see equation~\eqref{eq:scheme}, considered in this article
combines a finite difference approximation in space and the explicit recursion
$$
u_{m+1}^{\LT}=\exp\left(\tau N^2D^N\right)\hat{u}_{m+1}^{\LT},
$$
where for $n=1,\ldots,N-1$ one has
$$
\hat{u}_{m+1,n}^{\LT}=\exp\left(\sqrt{N}f(u_{m,n}^{\LT})\Delta_{m,n}W-\frac{Nf(u_{m,n}^{\LT})^2\tau}{2}\right)u_{m,n}^{\LT},
$$
where $\tau=T/M>0$ denotes the time step size, $h=1/N$ is the mesh size, $\Delta_{m,n}W$ denote space-time Wiener increments,
$N^2D^N$ the $(N-1)\times(N-1)$ matrix of the discrete Laplace operator, and $g(v)=vf(v)$.
Observe that the diffusion part of~\eqref{eq:spde-intro} is solved exactly, while
the noise part is solved exactly in the case of the parabolic Anderson model
(where one has $g(v)=v$ and $f(v)=1$ and thus the subsystem is a geometric Brownian motion).
This shares similarity with the works \cite{MR3941886,MR4356832} on stochastic differential equations. For a general mapping $g$, we
freeze the factor $f$ at the previous time point and obtain a geometric Brownian motion in the spirit
of the exponential scheme proposed in \cite{MR4177372} for finite dimensional problems.

The main results of the paper are the following:
\begin{itemize}
\item We obtain a fully discrete explicit approximation of the stochastic heat equation~\eqref{eq:spde-intro} that is positivity-preserving, see Proposition~\ref{propo:positivity-scheme}.
\item We show bounds for the second moment of the numerical approximation under a CFL condition $\tau/h={\rm O}(1)$ in Proposition~\ref{propo:moment}.
\item We prove strong convergence, with rate $1/4$, for the temporal discretization under a CFL condition $\tau/h^2={\rm O}(1)$,
see Theorem~\ref{theo:main}. The strong convergence of the fully discrete scheme is provided in Corollary~\ref{cor:errorfull}.
\end{itemize}
We leave the study of weak convergence of the proposed scheme to possible future works.
On top of that, we show positivity of the exact solution to the SPDE~\eqref{eq:spde-intro} on compact domains.
This follows naturally from the numerical analysis of the proposed approximation, see Proposition~\ref{propo:positivity-exact}.
Let us mention that the CFL conditions above are not due to the discretization of the Laplace operator, since the linear part is solved exactly.
They are due to the discretization of the contribution of the space-time white noise in the temporal evolution. Numerical experiments confirm that the CFL condition is necessary when studying the mean-square convergence of the proposed scheme.

This paper is organized as follows. Section~\ref{sec:setting} presents the setting, assumptions, and useful results on the considered SHE.
We also recall results on the finite difference discretization from \cite{MR1644183}.
Section~\ref{sec:scheme} contains the definition of the proposed Lie--Trotter splitting as well as the main results of the paper.
We postpone their proofs to Section~\ref{sec:proofs}. We dedicate Section~\ref{sec:num} to numerical experiments illustrating our qualitative and quantitative results on
the proposed splitting scheme. The last section~\ref{sec:syst} briefly presents an extension to systems of nonlinear stochastic heat equations.
Appendix~\ref{sec:app} contains a proof of an auxiliary inequality used in the proofs of the main results.

\section{Setting}\label{sec:setting}
This section provides the necessary setting for
the description of the considered class of nonlinear stochastic heat equations as well as of its solution.
We recall the notion of a mild solution and a standard well-posedness result for completeness. In addition, we recall
the spatial discretization by finite difference from \cite{MR1644183}.

For any real-valued continuous function $v\colon[0,1]\to\R$, let $\|v\|_\infty=\underset{x\in[0,1]}\max~|v(x)|$.

Let $(\Omega,\mathcal{F},\mathbb{P})$ be a probability space, equipped with a filtration $\bigl(\mathcal{F}_t\bigr)_{t\ge 0}$ which satisfies the usual conditions. The expectation operator is denoted by $\mathbb{E}[\cdot]$. In the sequel, $C$ denotes a generic constant that may vary from line to line. We sometimes use subscripts on $C$ to indicate dependence on parameters.

\subsection{Description of the SPDE}\label{sec:spde}

Let us first introduce the main assumptions needed for the numerical analysis of the proposed time integrator for the stochastic heat equation.
\begin{assumption}\label{ass:u0}
The initial value $u_0\colon[0,1]\to \R$ is a function of class $\mathcal{C}^3$, and satisfies the conditions $u_0(0)=u_0(1)=0$.
\end{assumption}
Note that the regularity assumption on the initial value above is for ease of presentation. For weaker conditions, see~\cite{MR1644183} or~\cite{MR4050540}.

When discussing positivity-preserving properties, a further condition is needed.
\begin{assumption}\label{ass:pos}
The initial value $u_0\colon[0,1]\to \R$ satisfies $u_0(x)\ge 0$ for all $x\in[0,1]$.
\end{assumption}

For the nonlinearity in the considered SPDE, we make use of the following.
\begin{assumption}\label{ass:g}
The mapping $g\colon\R\to\R$ is of class $\mathcal{C}^1$, is globally Lipschitz continuous, and satisfies $g(0)=0$.
\end{assumption}
We denote by $\Lg$ the Lipschitz constant of $g$:
\[
\Lg=\underset{v_1,v_2\in\R, v_2\neq v_1}\sup~\frac{|g(v_2)-g(v_1)|}{|v_2-v_1|}.
\]
The moment bounds and the error estimates presented below depend on the value of the Lipschitz constant $\Lg$.
This is not indicated in order to simplify the notation.

We then introduce the auxiliary mapping $f\colon\R\to\R$ defined for all $v\in\R\setminus \{ 0 \}$ by
\begin{equation}\label{nbrforCEB}
f(v)=\frac{g(v)}{v}=\int_0^1 g'(rv)\,\text dr
\end{equation}
and by $f(0)=g'(0)$. Since $g'$ is continuous by Assumption~\ref{ass:g}, the mapping $f$ is continuous and bounded, and one has the upper bound $\underset{v\in\R}\sup~|f(v)|\le \Lg$.

For a fixed time horizon $T>0$, let $W=\left\{W(t,x)\,\colon\, t\in[0,T], x\in[0,1]\right\}$ be an $\mathcal{F}_t$-adapted Brownian sheet.
We recall that a Brownian sheet is a Gaussian random field with mean zero and covariance $\E[W(t,x)W(s,y)]=(t\wedge s)(x\wedge y)$ for all $s,t\in[0,T]$ and
$x,y\in[0,1]$, see for instance \cite{MR876085}. We consider the stochastic heat equation in the It\^o sense
\begin{equation}\label{eq:spde}
\left\lbrace
\begin{aligned}
&\text d u(t,x) = \partial_{xx}^2 u(t,x)\,\text dt + g(u(t,x))\,\text dW(t,x),\\
&u(t,0)=u(t,1)=0,\\
&u(0,x) = u_{0}(x)
\end{aligned}
\right.
\end{equation}
for $t\in[0,T]$ and $x\in[0,1]$, where $u_{0}$ and $g$ satisfy Assumption~\ref{ass:u0} and Assumption~\ref{ass:g}, respectively.

In order to define a mild solution of the stochastic heat equation~\eqref{eq:spde}, we introduce the heat kernel
\begin{equation*}
G(t,x,y) = \sum_{j=1}^{\infty} e^{- j^{2} \pi^{2} t} \sin(j \pi x) \sin(j \pi y),
\end{equation*}
for $t\ge 0,x,y\in[0,1]$, which is the fundamental solution of the (deterministic) heat equation with homogeneous Dirichlet boundary conditions:
\begin{equation*}
\left\lbrace
\begin{aligned}
&\text d v(t,x) = \partial_{xx}^2 v(t,x)\,\text dt,\\
&v(t,0)=v(t,1)=0,\\
&v(0,x) = \delta (x),
\end{aligned}
\right.
\end{equation*}
where the initial value is the Dirac delta function.

A mild solution to the SPDE~\eqref{eq:spde} is a random field $\bigl(u(t,x)\bigr)_{t \in [0,T],x \in [0,1]}$
satisfying the following integral equation almost surely: for all $t\in[0,T]$ and $x\in[0,1]$, one has
\begin{equation}\label{eq:spde-mild}
u(t,x)=\int_0^1 G(t,x,y)u_0(y)\,\text dy+\int_0^t \int_0^1 G(t-s,x,y)g(u(s,y))\,\text dW(s,y).
\end{equation}
The stochastic integral in~\eqref{eq:spde-mild} is understood in the It\^o--Walsh sense, see for instance \cite{MR1500166,MR3222416,MR876085}.

We collect some properties of the mild solution $u(t,x)$ to the stochastic heat equation~\eqref{eq:spde} in the following statement, see for instance~\cite[Proposition~3.7]{MR1644183}.
\begin{proposition}
Consider the stochastic heat equation~\eqref{eq:spde} under Assumptions~\ref{ass:u0}~and~\ref{ass:g}.
There exists a unique mild solution $\left(u(t,x)\right)_{t\in[0,T],x\in[0,1]}$  to the SPDE~\eqref{eq:spde}.
In addition, for all $T \in (0,\infty)$, there exists $C_T\in(0,\infty)$ such that
\[
\underset{t\in[0,T]}\sup~\underset{x\in[0,1]}\sup~\E[|u(t,x)|^2]\le C_T(1+\|u_0\|_\infty^2).
\]
Finally, the solution satisfies the following mean-square regularity property: for all $T\in(0,\infty)$,
there exists $C_{T}\in(0,\infty)$ such that for all $x_1,x_2\in[0,1]$ and all $t_1,t_2\in[0,T]$ one has
\begin{equation}\label{eq:regulexact}
\left(\E[|u(t_2,x_2)-u(t_1,x_1)|^2]\right)^{\frac12}\le C_{T}\bigl(|t_2-t_1|^{\frac14}+|x_2-x_1|^{\frac12}\bigr).
\end{equation}
\end{proposition}

In this article, our objective is to propose and analyze consistent numerical schemes which preserve the following property of the exact solution:
if the initial value $u_0$ is nonnegative, then the exact solution to the stochastic heat equation, $u(t,\cdot)$, remains nonnegative for all $t>0$.
\begin{proposition}\label{propo:positivity-exact}
Consider the stochastic heat equation~\eqref{eq:spde} together with Assumptions~\ref{ass:u0}, \ref{ass:pos} and~\ref{ass:g}.
Then, for all $t\in(0,\infty)$ and all $x\in[0,1]$, almost surely, one has
\[
u(t,x)\ge 0.
\]
\end{proposition}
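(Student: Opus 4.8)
The plan is to deduce positivity of the exact solution from the positivity of the numerical scheme together with the strong convergence result, by a limiting argument. Since the conclusion is pointwise in $(t,x)$, I would fix $t\in(0,\infty)$ and $x\in[0,1]$ and show that $\PP(u(t,x)\ge 0)=1$. The boundary cases $x\in\{0,1\}$ are immediate, since $u(t,0)=u(t,1)=0$, so I may assume $x\in(0,1)$.

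First I would set up a sequence of discretizations whose grids approach $(t,x)$ while respecting the CFL constraints. Letting $N\to\infty$, I would choose the number of time steps $M=M(N)$ so that $\tau=T/M$ and $h=1/N$ satisfy $\tau/h^2={\rm O}(1)$ as $N\to\infty$; note that this automatically yields $\tau/h={\rm O}(1)$ (since $\tau\le Ch^2$ gives $\tau/h\le Ch\le C$ for $h\le 1$), so both CFL conditions needed for Corollary~\ref{cor:errorfull} and Proposition~\ref{propo:moment} hold, and moreover $\tau\to 0$ and $h\to 0$ along this sequence. A concrete choice is $\tau\asymp h^2$. For each $N$ I would then pick grid indices $n_N=\lfloor xN\rfloor$ and $m_N=\lfloor t/\tau\rfloor$, so that the grid points $x_{n_N}=n_N h$ and $t_{m_N}=m_N\tau$ satisfy $\lvert x_{n_N}-x\rvert\le h\to 0$ and $\lvert t_{m_N}-t\rvert\le \tau\to 0$.

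Next I would combine the two error contributions. By the positivity-preserving property of the scheme (Proposition~\ref{propo:positivity-scheme}), one has $u_{m_N,n_N}^{\LT}\ge 0$ almost surely for every $N$. By the fully discrete error estimate (Corollary~\ref{cor:errorfull}), $\E[\lvert u_{m_N,n_N}^{\LT}-u(t_{m_N},x_{n_N})\rvert^2]\to 0$, and by the mean-square regularity~\eqref{eq:regulexact}, $\bigl(\E[\lvert u(t_{m_N},x_{n_N})-u(t,x)\rvert^2]\bigr)^{1/2}\le C_T\bigl(\lvert t_{m_N}-t\rvert^{1/4}+\lvert x_{n_N}-x\rvert^{1/2}\bigr)\to 0$. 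By the triangle inequality in $L^2(\Omega)$, it follows that $u_{m_N,n_N}^{\LT}\to u(t,x)$ in the mean-square sense. Mean-square convergence implies convergence in probability, hence there is a subsequence along which the convergence holds almost surely; since the almost-sure limit of nonnegative random variables is nonnegative, I conclude $u(t,x)\ge 0$ almost surely.

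The genuinely delicate point, and the main thing to get right, is reconciling the CFL requirement with the demand that $\tau,h\to 0$ along the chosen sequence, and ensuring that the grid values $u_{m_N,n_N}^{\LT}$ converge to $u(t,x)$ \emph{itself} rather than merely to the grid-point value $u(t_{m_N},x_{n_N})$; this is precisely where the mean-square space-time regularity~\eqref{eq:regulexact} is indispensable, as it controls the discrepancy coming from moving the evaluation point onto the grid. Everything else is routine bookkeeping, given the forward references to Proposition~\ref{propo:positivity-scheme} and Corollary~\ref{cor:errorfull}.
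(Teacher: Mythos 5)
Your proposal is correct and follows essentially the same route as the paper: the paper likewise combines Corollary~\ref{cor:errorfull} with the regularity estimate~\eqref{eq:regulexact} to show that $u^{\LT}(\ell^{M}(t),\kappa^N(x))$ converges to $u(t,x)$ in mean square under the CFL condition, then extracts an almost surely convergent subsequence and passes the nonnegativity of Proposition~\ref{propo:positivity-scheme} to the limit. Your choices $n_N=\lfloor xN\rfloor$ and $m_N=\lfloor t/\tau\rfloor$ are exactly the paper's $\kappa^N(x)$ and $\ell^M(t)$, so there is no substantive difference.
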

The proof of Proposition~\ref{propo:positivity-exact} above is postponed to Section~\ref{sec:proof-positivity-exact}.
It is a consequence of the analysis of the fully-discrete numerical scheme and combines two arguments:
on the one hand, the numerical scheme satisfies a variant of Proposition~\ref{propo:positivity-exact}, see Proposition~\ref{propo:positivity-scheme} below,
on the other hand, Theorem~\ref{theo:main} gives a strong convergence result of the numerical approximation.
Note that similar results are known when considering the stochastic heat equation on the real line, see for instance the works
\cite{MR1149348,MR1271224} and the lecture notes \cite{Ryzhik}. We are not aware of positivity-preserving results for SPDEs on bounded domains.

\subsection{Spatial discretization}\label{sec:spatial-discretization}
Let us recall the spatial discretization based on a finite difference approximation on a uniform grid from \cite{MR1644183}. For any integer $N\in\N$, let $h=1/N$ be the space mesh size,
and let $x_n=nh$ for $0\le n\le N$ be the grid points. Let $\kappa^N\colon[0,1]\to\{x_0,\ldots,x_N\}$,
be the mapping defined by $\kappa^{N}(x)=x_n$ for $x\in[x_n,x_{n+1})$ if $n\in\{0,\ldots,N-1\}$, and $\kappa^N(1)=\kappa^{N}(x_N)=x_N=1$.

Throughout this article, we use the convention that for any vector $v=\bigl(v_n\bigr)_{1\le n\le N-1}\in\R^{N-1}$, we append discrete homogeneous Dirichlet boundary conditions $v_{0}=0$ and $v_{N}=0$ when needed.

We discretize the initial value $u_0$ of the stochastic heat equation~\eqref{eq:spde} by $u_{0,n}^N=u_n^N(0)=u(0,x_n)$ for $0\le n\le N$. Note that discrete homogeneous Dirichlet boundary conditions $u_{0,0}^N=u_{0,N}^N=0$ are satisfied owing to Assumption~\ref{ass:u0}. Let us then define a piecewise linear extension $u^N(0,\cdot)\colon[0,1]\to\R$ satisfying $u^N(0,x_n)=u_{0,n}^{N}$ for all $n=0,\ldots,N$, meaning that for $x \in (0,1)$ one has
\[
u^N(0,x)=N\bigl(\kappa^N(x)+h-x\bigr)u(0,\kappa^N(x))+N\bigl(x-\kappa^N(x)\bigr)u(0,\kappa^N(x)+h).
\]
Let $D^N=\bigl(D_{ij}^N\bigr)_{1\le i,j\le N-1}$ denote the matrix coming from a standard finite difference discretization of the Laplace operator
at the grid points $x_n$ with homogeneous Dirichlet boundary conditions. The matrix $D^N$ is thus given by
\[
D^N=
\begin{pmatrix}
-2 & 1 & 0 & \ldots & 0 & 0 & 0\\
1 & -2 & 1 & \ddots & 0 & 0 & 0\\
0 & 1 & -2 & \ddots & 0 & 0 & 0\\
\vdots & \ddots & \ddots & \ddots & \ddots & \ddots & \vdots\\
0 & 0 & 0 & \ddots & -2 & 1 & 0\\
0 & 0 & 0 & \ddots & 1 & -2 & 1\\
0 & 0 & 0 & \ldots & 0 & 1 & -2
\end{pmatrix}
.
\]
We then introduce the discrete heat kernel $G^N(t)=\bigl(G_{ij}^{N}(t)\bigr)_{1\le i,j\le N-1}=e^{tN^2D^N}$, for $t\geq 0$.
By convention, set $G_{00}^N(t)=G_{NN}^N(t)=1$, $G_{0N}^N(t)=G_{N0}^N(t)=0$ and $G_{0j}^N(t)=G_{Nj}^N(t)=0$ for all $j\in\{1,\ldots,N-1\}$, in order to satisfy homogeneous discrete Dirichlet boundary conditions. Finally, we extend the definition of $G^N(t)=\bigl(G_{ij}^{N}(t)\bigr)_{1\le i,j\le N-1}$ to $\bigl(G^N(t,x,y)\bigr)_{t\ge 0,x,y\in[0,1]}$ by asking
that $G^N(t,x_i,y_j)=NG_{ij}^N(t)$ for $0\le i,j\le N$ and for $t\ge 0$ and $y\in[0,1]$
\[
G^N(t,x,y)=N\bigl(\kappa^N(x)+h-x\bigr)G^{N}(t,\kappa^N(x),\kappa^N(y))+N\bigl(x-\kappa^N(x)\bigr)G^N(t,\kappa^N(x)+h,\kappa^N(y))
\]
for $x\in(0,1)$ and $G^N(t,0,y)=G^N(t,1,y)=0$. As a result, the mapping $(x,y)\mapsto G^N(t,x,y)$ is piecewise linear in $x$ and piecewise constant in $y$ at all times $t$.

It is worth recalling the following well-known property of the discrete heat kernel: one has $G_{ij}^{N}(t)\ge 0$ for all $i,j\in\{1,\ldots,N-1\}$ and $t\ge 0$. As a consequence, one has $G^N(t,x,y)\ge 0$ for $x,y\in[0,1]$ and $t\ge 0$. This property follows from the fact that $t N^{2} D^{N}$ is a Metzler matrix,
see for instance~\cite{FarinaLorenzo2000PLST} for a definition, and the exponential of a Metzler matrix has only non-negative elements.

\bigskip

We are now in position to define the spatial discretization $u^{N}$, for $N\in\N$, by the following integral equality
\begin{equation}\label{eq:uN}
u^N(t,x)=\int_0^1 G^N(t,x,y)u^N(0,\kappa^N(y))\,\text dy+\int_0^t\int_0^1 G^N(t-s,x,y)g(u^N(s,\kappa^N(y)))\,\text dW(s,y)
\end{equation}
for $t\ge 0$ and $x\in[0,1]$.
Note that the mapping $x\in[0,1]\mapsto u^N(t,x)$ is linear on $[x_n,x_{n+1}]$, for every $n\in\{0,\ldots,N-1\}$, for every $t\ge 0$. In addition, one has $u^N(t,0)=u^N(t,1)=0$ for every $t\ge 0$. For a practical implementation of the scheme, it is sufficient to compute $u_{n}^N(t)=u^N(t,x_n)$ for all $1\le n\le N-1$. This is performed as follows: for all $t\ge 0$ and $1\le n\le N-1$ one has
\[
u_n^N(t)=\sum_{j=1}^{N-1}G_{nj}^N(t)u_{0,j}^N+\sqrt{N}\sum_{j=1}^{N-1}\int_0^t G_{nj}^N(t-s)g(u_j^N(s))\,\text dW_j^N(s),
\]
where
\[
W_n^N(t)=\sqrt{N}\bigl(W(t,x_{n+1})-W(t,x_n)\bigr).
\]
By definition of a Wiener sheet, observe that the processes $\bigl(W_1^N(t)\bigr)_{t\ge 0},\ldots,\bigl(W_{N-1}^N(t)\bigr)_{t\ge 0}$ are independent standard real-valued Wiener processes, for any $N\in\N$.

Introduce the $\R^{N-1}$-valued process $u^N$ defined by $u^N(t)=\bigl(u_n^N(t)\bigr)_{1\le n\le N-1}$ for all $t\ge 0$. This process is solution of the following stochastic differential equation
\begin{equation}\label{eq:spatialscheme}
\text du^N(t)=N^2D^Nu^N(t)\,\text dt+\sqrt{N}g(u^N(t))\,\text dW^N(t)
\end{equation}
with initial value $u^N(0)=\bigl(u_0^N\bigr)_{1\le n\le N-1}$, where the notation
$\bigl(g(u^N(t))\,\text dW^N(t)\bigr)_n=g(u_n^N(t))\,\text dW_n^N(t)$ is used.

Let us recall the following convergence result for the spatial discretization, see \cite[Theorem~3.1]{MR1644183}.
\begin{proposition}\label{propo:error-spatial}
Consider the stochastic heat equation~\eqref{eq:spde} with a nonlinearity $g$ satisfying Assumption~\ref{ass:g}.
Denote by $\left(u(t,x)\right)_{t\in[0,T],x\in[0,1]}$ its exact solution and by $\left(u^N(t,x)\right)_{t\in[0,T],x\in[0,1]}$ the numerical approximation by finite differences with mesh size $h=1/N$.
For all $T\in(0,\infty)$ and any initial value $u_0$ satisfying Assumption~\ref{ass:u0}, there exists $C_{T}(u_0)\in(0,\infty)$ such that for all $h=1/N$ with $N\in\N$ one has
\begin{equation}\label{eq:errorspatial}
\underset{t\in[0,T]}\sup~\underset{x\in[0,1]}\sup~\bigl(\E[|u^N(t,x)-u(t,x)|^2]\bigr)^{\frac12}\le C_{T}(u_0)h^{\frac12}.
\end{equation}
\end{proposition}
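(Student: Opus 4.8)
The plan is to work entirely within the mild formulations~\eqref{eq:spde-mild} and~\eqref{eq:uN} and to control the mean-square error
\[
\Phi_N(t):=\sup_{x\in[0,1]}\E\bigl[|u^N(t,x)-u(t,x)|^2\bigr]
\]
by a Gronwall-type argument in $t$. First I would write $u^N(t,x)-u(t,x)$ as the sum of a deterministic contribution stemming from the initial data and a stochastic contribution stemming from the It\^o--Walsh integrals, and treat the two separately. All of the analytic difficulty is funnelled into sharp $L^2$ bounds for the continuous kernel $G$, for the discrete kernel $G^N$, and above all for their difference.

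For the deterministic part I would split
\[
\int_0^1 G^N(t,x,y)u^N(0,\kappa^N(y))\,\text dy-\int_0^1 G(t,x,y)u_0(y)\,\text dy
\]
into a kernel-difference term $\int_0^1\bigl(G^N-G\bigr)(t,x,y)u_0(y)\,\text dy$ and an interpolation term $\int_0^1 G^N(t,x,y)\bigl(u^N(0,\kappa^N(y))-u_0(y)\bigr)\,\text dy$. Since $u_0$ is of class $\mathcal{C}^3$ by Assumption~\ref{ass:u0}, the second term is controlled by the piecewise-linear interpolation error of $u_0$, of order $h^2$, while the first is bounded through the kernel-difference estimate stated below; both are of higher order than the claimed rate and are therefore harmless.

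For the stochastic part I would add and subtract $\int_0^t\int_0^1 G^N(t-s,x,y)g(u(s,y))\,\text dW(s,y)$, producing the kernel-difference term $\int_0^t\int_0^1\bigl(G^N-G\bigr)(t-s,x,y)g(u(s,y))\,\text dW(s,y)$ and the nonlinearity term $\int_0^t\int_0^1 G^N(t-s,x,y)\bigl(g(u^N(s,\kappa^N(y)))-g(u(s,y))\bigr)\,\text dW(s,y)$. Applying the It\^o isometry turns each into a deterministic space-time integral of the second moment of its integrand. Using $|g(v)|\le\Lg|v|$ and the moment bound on $u$, the first term is bounded by a constant times $\sup_x\int_0^t\int_0^1|G^N-G|^2\,\text dy\,\text ds$, which will be $\mathrm{O}(h)$. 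In the second term I would use the global Lipschitz continuity of $g$ (Assumption~\ref{ass:g}) and then split $u^N(s,\kappa^N(y))-u(s,y)$ as $\bigl(u^N(s,\kappa^N(y))-u(s,\kappa^N(y))\bigr)+\bigl(u(s,\kappa^N(y))-u(s,y)\bigr)$; the first increment feeds back $\Phi_N(s)$, while the second is of squared order $h$ thanks to the mean-square spatial regularity~\eqref{eq:regulexact} of the exact solution, since $|\kappa^N(y)-y|\le h$.

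The crux, and the main obstacle, is the family of kernel estimates, namely the difference bound
\[
\sup_{x\in[0,1]}\int_0^t\int_0^1\bigl|G(r,x,y)-G^N(r,x,y)\bigr|^2\,\text dy\,\text dr\le C\,h
\]
together with the a priori bound $\sup_{x\in[0,1]}\int_0^1|G^N(r,x,y)|^2\,\text dy\le C\,r^{-1/2}$ uniformly in $N$. These I would derive from the explicit spectral representations: the matrix $N^2D^N$ is diagonalized by the discrete sine vectors, with eigenvalues $-4N^2\sin^2\!\bigl(j\pi/(2N)\bigr)$ for $j=1,\dots,N-1$, to be compared term by term with the continuous eigenvalues $-j^2\pi^2$ appearing in $G$. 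The discrepancy originates from two sources, the perturbation of the low-frequency eigenvalues measured by $\bigl|4N^2\sin^2(j\pi/(2N))-j^2\pi^2\bigr|$ and the truncation of the continuous series at index $N-1$; balancing these two effects, while accounting for the piecewise-linear-in-$x$ and piecewise-constant-in-$y$ extensions of $G^N$ and for the sampling $\kappa^N$, is precisely where the half power of $h$ arises and is the delicate part of the proof. Since the weight $r\mapsto r^{-1/2}$ is integrable at the origin, I would finally close the estimate by a weakly singular (fractional) Gronwall lemma applied to $\Phi_N$, yielding $\sup_{t\in[0,T]}\Phi_N(t)\le C_T(u_0)\,h$ and hence the bound~\eqref{eq:errorspatial}.
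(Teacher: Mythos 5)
The paper does not prove this proposition at all: it is quoted verbatim from \cite[Theorem~3.1]{MR1644183} (Gy\"ongy), and no argument is given in the text. Your outline is, in substance, a faithful reconstruction of the proof in that reference: the same decomposition into an initial-data term, a kernel-difference stochastic term, and a Lipschitz feedback term, the same reliance on the It\^o isometry, on the singular bound $\sup_x\int_0^1|G^N(r,x,y)|^2\,\text dy\le Cr^{-1/2}$, on the $L^2$-in-time kernel-difference estimate of order $h$, on the spatial $h^{1/2}$-regularity~\eqref{eq:regulexact} to absorb the $\kappa^N$ shift, and the same closure by a weakly singular Gr\"onwall lemma. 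The spectral comparison you describe (discrete eigenvalues $4N^2\sin^2(j\pi/(2N))$ versus $j^2\pi^2$, truncation at $j=N-1$, balancing perturbation against tail) is exactly how the kernel estimates are obtained there, so the plan is sound and delivers the stated rate.

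One step as written would not go through and needs a different ingredient. For the deterministic contribution you propose to bound $\int_0^1\bigl(G^N-G\bigr)(t,x,y)u_0(y)\,\text dy$ ``through the kernel-difference estimate stated below,'' but that estimate is an $L^2$ bound \emph{integrated in time}, $\int_0^t\int_0^1|G-G^N|^2\,\text dy\,\text dr\le Ch$; it gives no control on the kernel difference at a \emph{fixed} time $t$, and indeed $\int_0^1|G(t,x,y)-G^N(t,x,y)|^2\,\text dy$ blows up as $t\downarrow 0$. What is actually needed here is the classical pointwise-in-time error estimate for the finite-difference approximation of the deterministic heat semigroup acting on smooth data: for $u_0$ of class $\mathcal{C}^3$ with $u_0(0)=u_0(1)=0$ (Assumption~\ref{ass:u0}), one has $\sup_{t,x}\bigl|\int_0^1 G^N(t,x,y)u^N(0,\kappa^N(y))\,\text dy-\int_0^1 G(t,x,y)u_0(y)\,\text dy\bigr|\le Ch$, proved by expanding $u_0$ in the (discrete and continuous) sine bases and using the decay of its Fourier coefficients, not by a kernel $L^2$ bound. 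This is exactly why the regularity hypothesis on $u_0$ enters, and it is the reason the constant in~\eqref{eq:errorspatial} depends on $u_0$. With that substitution, and granting the (admittedly deferred) verification of the two kernel estimates, your argument is complete.
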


In the error analysis below, the following auxiliary result from \cite{MR4050540} (see Proposition~$2.4$) on the temporal regularity of $u^N$ is used:
there exists $C_{T}(u_0)\in(0,\infty)$ such that for all $t,s\in[0,T]$, one has
\begin{equation}\label{eq:regul-uN}
\underset{N\in\N}\sup~\underset{x\in[0,1]}\sup~\E[|u^N(t,x)-u^N(s,x)|^2]\le C_{T}(u_0)|t-s|^{\frac12}.
\end{equation}

\section{The positivity-preserving splitting scheme}\label{sec:scheme}
In the core part of this paper, we present and study the strong convergence of
an efficient and positivity-preserving time integrator for the stochastic heat equation~\eqref{eq:spde}.

Let $T\in(0,\infty)$ and divide the interval $[0,T]$ into $M\in\N$ subintervals $[t_m,t_{m+1}]$ of length $\tau=T/M$, where $t_m=m\tau$ for $m\in\{0,\ldots,M\}$.
Introduce the mapping $\ell^M\colon[0,T]\to\{t_0,\ldots,t_M\}$, defined by $\ell^{M}(t)=t_m$ for all $t\in[t_m,t_{m+1})$, if $m\in\{0,\ldots,M-1\}$, and $\ell^{M}(T)=\ell^M(t_M)=t_M=T$.

We propose a fully-discrete explicit scheme based on a Lie--Trotter splitting strategy
producing approximations $u_m^{\LT}=\bigl(u_{m,n}^{\LT}\bigr)_{1\le n\le N-1}$ of the finite difference approximation
$u^N(t_m)=\bigl(u_n^{N}(t_m)\bigr)_{1\le n\le N-1}$ at the grid times $t_m$, $m=0,\ldots,M$. We set the initial value to be $u_{0,n}^{\LT}=u_n^N(0)=u_{0,n}^N$ for all $1\le n\le N-1$. As above, one has $u_{m,0}^{\LT}=0$ and $u_{m,N}^{\LT}=0$ for all $m\in\{0,\ldots,M\}$. In this way, homogeneous Dirichlet boundary conditions are satisfied by the numerical scheme at all times.

We explain the construction of the scheme in Section~\ref{sec:schemeconstruction}. We then describe the main results of this article: the positivity-preserving property of the splitting scheme (Proposition~\ref{propo:positivity-scheme}) and the mean-square convergence in time with order $1/4$ (Theorem~\ref{theo:main} and Corollary~\ref{cor:errorfull}).

\subsection{Description of the time integrator}\label{sec:schemeconstruction}

Let us describe how the splitting scheme is constructed.  Given the numerical solution $u_{m}^{\LT}=\left(u_{m,n}^{\LT}\right)_{1\le n\le N-1}$ at grid time $t_m=m\tau$ for $0\le m\le M-1$,
the solution $u_{m+1}^{\LT}$ at the next grid time $t_{m+1}=t_m+\tau$ is constructed by successively solving two subsystems in $\R^{N-1}$:
\begin{itemize}
\item first, the linear It\^o SDE system
\begin{equation}\label{eq:scheme-subsystem1}
\text dv_{m,n}^{M,N,1}(t)=\sqrt{N}v_{m,n}^{M,N,1}(t)f(u_{m,n}^{\LT})\,\text dW_n^N(t),
\end{equation}
for $n\in\{1,\ldots,N-1\}$ and $t\in[t_m,t_{m+1}]$, with initial value $v_{m,n}^{M,N,1}(t_m)=u_{m,n}^{\LT}$, where we recall
(see equation~\eqref{nbrforCEB} in Section~\ref{sec:setting}) that the auxiliary function $f$ is such that $g(v)=vf(v)$ for all $v\in\R$;
\item second, the linear ODE system
\begin{equation}\label{eq:scheme-subsystem2}
\text dv_{m}^{M,N,2}(t)=N^2D^Nv_{m}^{M,N,2}(t)\,\text dt,
\end{equation}
for $t\in[t_m,t_{m+1}]$, with initial value $v_{m,n}^{M,N,2}(t_m)=v_{m,n}^{M,N,1}(t_{m+1})$.
\end{itemize}
Observe that the solutions of the two subsystems above are known: the solution of the SDE~\eqref{eq:scheme-subsystem1} is given by
\begin{equation}\label{eq:scheme-subsystem1-solution}
v_{m,n}^{M,N,1}(t)=\exp\left(\sqrt{N}f(u_{m,n}^{\LT})\bigl(W_n^N(t)-W_n^N(t_m)\bigr)-\frac{Nf(u_{m,n}^{\LT})^2 (t-t_m)}{2}\right)u_{m,n}^{\LT},
\end{equation}
for all $t\in[t_m,t_{m+1}]$, and the solution of the ODE~\eqref{eq:scheme-subsystem2} is given by
\begin{equation}\label{eq:scheme-subsystem2-solution}
v_{m}^{M,N,2}(t)=e^{(t-t_m)N^2D^N}v_{m}^{M,N,1}(t_{m+1}),
\end{equation}
for all $t\in[t_m,t_{m+1}]$.

Gathering the expressions above gives the following expression for the proposed Lie--Trotter splitting scheme
\begin{equation}\label{eq:scheme}
u_{m+1}^{\LT}=e^{\tau N^2D^N}\left(\exp\Bigl(\sqrt{N}f(u_{m,n}^{\LT})\Delta_{m,n}W-\frac{Nf(u_{m,n}^{\LT})^2\tau}{2}\Bigr)u_{m,n}^{\LT}\right)_{1\le n\le N-1},
\end{equation}
where $\Delta_{m,n}W=W_n^{N}(t_{m+1})-W_n^{N}(t_m)$. Observe that the random variables
$\bigl(\Delta W_{m,n}\bigr)_{0\le m\le M-1,1\le n\le N-1}$ are independent standard real-valued Gaussian random variables.

The splitting scheme formula~\eqref{eq:scheme} can also be written as
\begin{equation*}
u_{m+1,n}^{\LT}=\sum_{k=1}^{N-1}G_{nk}^{N}(\tau)\exp\Bigl(\sqrt{N}f(u_{m,k}^{\LT})\Delta_{m,n}W-\frac{N f(u_{m,k}^{\LT})^2\tau}{2}\Bigr)u_{m,k}^{\LT},
\end{equation*}
for all $m\in\{0,\ldots,M-1\}$ and $n\in\{1,\ldots,N-1\}$.

One of the key properties of the proposed splitting scheme is the following: if the initial value $u_0^{\LT}=\bigl(u_{0,n}^{\LT}\bigr)_{1\le n\le N-1}$ only has nonnegative elements, then for all $m\in\{1,\ldots,M\}$ the numerical solution $u_m^{\LT}=\bigl(u_{m,n}^{\LT}\bigr)_{1\le n\le N-1}$ at time $t_m=m\tau$ also only has nonnegative elements almost surely. In other words, the proposed scheme is positivity-preserving. This is stated in the next proposition.

\begin{proposition}\label{propo:positivity-scheme}
Let $M\in\N$ and $N\in\N$ be arbitrary integers and let $T\in(0,\infty)$.
Let Assumptions~\ref{ass:u0},~\ref{ass:pos} and~\ref{ass:g} be satisfied.
Let the sequence $u_{0}^{\LT},\ldots,u_{M}^{\LT}$ be given by the splitting scheme~\eqref{eq:scheme}, with $h=1/N$ and $\tau=T/M$, with initial value $u_{0,n}^{\LT}=u_0(x_n)\ge 0$ for all $n\in\{1,\ldots,N\}$. Then, almost surely, one has
\[
u_{m,n}^{\LT}\ge 0,
\]
for all $m\in\{1,\ldots,M\}$ and $n\in\{1,\ldots,N-1\}$.
\end{proposition}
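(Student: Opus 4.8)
The plan is to prove positivity by induction on $m$, exploiting the multiplicative structure of the splitting scheme, which decouples the noise step from the diffusion step. The base case $m=0$ holds by hypothesis, since $u_{0,n}^{\LT}=u_0(x_n)\ge 0$ by Assumption~\ref{ass:pos}. For the inductive step, assume $u_{m,n}^{\LT}\ge 0$ for all $n\in\{1,\ldots,N-1\}$, and examine the two-stage update leading to $u_{m+1}^{\LT}$.

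First I would analyze the noise subsystem~\eqref{eq:scheme-subsystem1}, whose exact solution is given in closed form by~\eqref{eq:scheme-subsystem1-solution}. The crucial observation is that the intermediate value $\hat u_{m+1,n}^{\LT}=v_{m,n}^{M,N,1}(t_{m+1})$ is obtained by multiplying $u_{m,n}^{\LT}$ by the exponential factor
\[
\exp\Bigl(\sqrt{N}f(u_{m,n}^{\LT})\Delta_{m,n}W-\frac{Nf(u_{m,n}^{\LT})^2\tau}{2}\Bigr),
\]
which is strictly positive almost surely, being the exponential of a (finite) real-valued random variable. Hence each intermediate component $\hat u_{m+1,n}^{\LT}$ has the same sign as $u_{m,n}^{\LT}$, and in particular $\hat u_{m+1,n}^{\LT}\ge 0$ for all $n$ by the induction hypothesis. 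This step is immediate and is really the heart of why the geometric-Brownian-motion form of the noise integration preserves positivity.

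Second I would handle the diffusion subsystem~\eqref{eq:scheme-subsystem2}, whose solution~\eqref{eq:scheme-subsystem2-solution} applies the matrix exponential $e^{\tau N^2 D^N}=G^N(\tau)$ to the nonnegative vector $\hat u_{m+1}^{\LT}$. Here I would invoke the well-known property recalled in Section~\ref{sec:spatial-discretization}, namely that $G_{ij}^N(\tau)\ge 0$ for all $i,j$ and all $\tau\ge 0$, because $\tau N^2 D^N$ is a Metzler matrix and the exponential of a Metzler matrix has nonnegative entries. Consequently
\[
u_{m+1,n}^{\LT}=\sum_{k=1}^{N-1}G_{nk}^N(\tau)\,\hat u_{m+1,k}^{\LT}
\]
is a nonnegative linear combination of nonnegative terms, hence nonnegative. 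This closes the induction.

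I do not expect a genuine obstacle in this argument; the proposition is essentially a structural consequence of two facts, positivity of the exponential and nonnegativity of the discrete heat semigroup. The only point deserving a little care is the measure-theoretic bookkeeping of the almost-sure statement: the exponential factor is positive for every realization of the Gaussian increment $\Delta_{m,n}W$, so the positivity is in fact pathwise, and the countable (indeed finite) collection of indices $(m,n)$ causes no difficulty in asserting the result simultaneously almost surely. I would simply note that, since there are finitely many time steps $M$ and finitely many spatial nodes $N-1$, the exceptional null set is a finite union of null sets and hence null.
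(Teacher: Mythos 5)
Your proposal is correct and follows essentially the same argument as the paper: induction on the time index $m$, using that the noise step multiplies each component by an almost surely positive exponential factor and that the diffusion step applies the entrywise nonnegative matrix $G^N(\tau)=e^{\tau N^2 D^N}$ (exponential of a Metzler matrix). The additional remark on the finite union of null sets is a harmless elaboration that the paper leaves implicit.
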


\begin{proof}[Proof of Proposition~\ref{propo:positivity-scheme}]
The proof proceeds by recursion on the time index $m$.
\begin{itemize}
\item Note that $u_{0,n}^{\LT}=u(0,x_n)\ge 0$ for all $n\in\{0,\ldots,N\}$.
\item Assume that the property $u_{m,n}^{\LT}\ge 0$, for all $n\in\{1,\ldots,N-1\}$, holds at time $t_{m} = m \tau$. We prove that under this assumption, it also holds at time $t_{m+1}=(m+1) \tau$.

The argument is straightforward: the solutions of the subsystems~\eqref{eq:scheme-subsystem1} and~\eqref{eq:scheme-subsystem2} are nonnegative at all times when they have nonnegative initial values. More precisely, first one has
\[
v_{m,n}^{M,N,1}(t_{m+1})=e^{\sqrt{N}f(u_{m,n}^{\LT})\Delta_{m,n}W-N\frac{f(u_{m,n}^{\LT})^2 \tau}{2}}u_{m,n}^{\LT}\ge 0
\]
for all $n\in\{1,\ldots,N-1\}$. Second, using the inequality $G_{nk}^N(\tau)\ge 0$ (see Section~\ref{sec:spatial-discretization}), one has
\[
u_{m+1,n}^{\LT}=v_{m,n}^{M,N,2}(t_{m+1})=\sum_{k=1}^{N-1}G_{nk}^{N}(\tau)v_{m,k}^{M,N,1}(t_{m+1})\ge 0.
\]
Thus the positivity property of the numerical solution holds at time $t_{m+1}=(m+1) \tau$.
\end{itemize}
As a consequence, the property $u_{m,n}^{\LT}\ge 0$, for all $n\in\{1,\ldots,N-1\}$, holds for any $m\in\{0,\ldots,M\}$. The proof of Proposition~\ref{propo:positivity-scheme} is completed.
\end{proof}

\subsection{Convergence results}\label{sec:results}

Let us now prove that the proposed numerical scheme provides accurate approximation of the exact solution.
In this article, we show mean-square error estimates and give orders of convergence with respect to $\tau=T/M$ and $h=1/N$.

We impose a CFL stability condition in the sequel to ensure stability and convergence of the Lie--Trotter splitting scheme~\eqref{eq:scheme}
when applied to the stochastic heat equation~\eqref{eq:spde}; more precisely, we introduce conditions of the type $\tau \le \gamma h$ or $\tau\le \gamma h^2$ in the statements below, for some (nonrandom) arbitrary parameter $\gamma\in(0,\infty)$. The conditions on $\tau$ and $h$ above are equivalent to the conditions $\gamma M\ge TN$ and $\gamma M\ge TN^2$ on $M$ and $N$  respectively.

Owing to Proposition~\ref{propo:error-spatial}, it is sufficient to focus on the error $u_{m,n}^{\LT}-u_n^{N}(t_m)$ to obtain estimates for the total error $u_{m,n}^{\LT}-u(t_m,x_n)$. Proposition~\ref{propo:moment} shows moment bounds of the numerical solution and
is used to prove our main result in Theorem~\ref{theo:main}. As a corollary we obtain convergence of $u^{\LT}_{m,n}$ to the exact solution $u(t_{m},x_{n})$
at the grid points using results from \cite{MR1644183}.

Note that Assumption~\ref{ass:pos} on the positivity of the initial value is not needed in the statements on the moment bounds and on the convergence of the scheme below.

\begin{proposition}\label{propo:moment}
Assume that Assumptions~\ref{ass:u0} and~\ref{ass:g} are satisfied. Let the sequence $u_{0}^{\LT},\ldots,u_{M}^{\LT}$ be given by the Lie--Trotter splitting scheme~\eqref{eq:scheme}.

For all $\gamma\in(0,\infty)$ and all $T\in(0,\infty)$, there exists $C_{\gamma,T}\in(0,\infty)$ such that for all $\tau=T/M$ and $h=1/N$ satisfying the condition $\tau \le \gamma h$, one has
\begin{equation}\label{eq:momentscheme}
\underset{0\le m\le M}\sup~\underset{1\le n\le N-1}\sup~\E\left[|u_{m,n}^{\LT}|^2\right]\le C_{\gamma,T}\bigl(1+\|u_0\|_\infty^2\bigr).
\end{equation}
\end{proposition}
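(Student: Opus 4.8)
The plan is to rewrite the scheme in a discrete mild (Duhamel) form and to exploit the martingale structure of the multiplicative noise factors, rather than to iterate a one-step estimate. A naive step-by-step argument fails here: the conditional second moment produced by a single step carries a factor of the type $e^{Nf^2\tau}-1$, which under the CFL condition $\tau\le\gamma h$ turns out to be of size $O(\tau^{1/2})$ per step once the diagonal decay of the discrete heat kernel is taken into account, so that iterating over $M=T/\tau$ steps would yield an unbounded constant $e^{CT\tau^{-1/2}}$. The remedy is to keep all noise increments inside a single sum and use their orthogonality. Concretely, writing $P=G^N(\tau)=e^{\tau N^2D^N}$ and introducing the geometric-Brownian-motion factors
\[
\xi_{m,k}=\exp\Bigl(\sqrt N f(u_{m,k}^{\LT})\Delta_{m,k}W-\frac{Nf(u_{m,k}^{\LT})^2\tau}{2}\Bigr),
\]
the scheme reads $u_{m+1}^{\LT}=P\bigl(u_m^{\LT}+R_m\bigr)$ with $(R_m)_k=(\xi_{m,k}-1)u_{m,k}^{\LT}=:\eta_{m,k}$. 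Unrolling the recursion and using $P^{m-j}=G^N((m-j)\tau)=G^N(t_{m-j})$ gives the discrete Duhamel formula
\[
u_{m,n}^{\LT}=\bigl(G^N(t_m)u_0^{\LT}\bigr)_n+\sum_{j=0}^{m-1}\sum_{k=1}^{N-1}G_{nk}^N(t_{m-j})\,\eta_{j,k}.
\]

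The key computation concerns the conditional moments of the factors $\xi_{m,k}$. Since $\Delta_{m,k}W\sim\mathcal N(0,\tau)$ is independent of $\mathcal F_{t_m}$ while $f(u_{m,k}^{\LT})$ is $\mathcal F_{t_m}$-measurable, one has $\E[\xi_{m,k}\mid\mathcal F_{t_m}]=1$ and $\E[(\xi_{m,k}-1)^2\mid\mathcal F_{t_m}]=e^{Nf(u_{m,k}^{\LT})^2\tau}-1$. Here the CFL condition $\tau\le\gamma h=\gamma/N$ enters decisively: it gives $N\tau\le\gamma$, whence, using $|f|\le\Lg$ and $e^x-1\le xe^x$, one obtains $e^{Nf^2\tau}-1\le N\Lg^2\tau\,e^{\gamma\Lg^2}=:C\,N\tau$. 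Because $\E[\eta_{j,k}\mid\mathcal F_{t_j}]=0$ and the increments $\Delta_{j,k}W$ are independent across both $j$ and $k$, the variables $\eta_{j,k}$ are pairwise orthogonal in $L^2(\Omega)$ (a discrete It\^o isometry). Squaring the Duhamel formula and taking expectations therefore yields the exact identity
\[
\E\bigl[|u_{m,n}^{\LT}|^2\bigr]=\bigl|(G^N(t_m)u_0^{\LT})_n\bigr|^2+\sum_{j=0}^{m-1}\sum_{k=1}^{N-1}G_{nk}^N(t_{m-j})^2\,\E[\eta_{j,k}^2],
\]
together with the bound $\E[\eta_{j,k}^2]\le C\,N\tau\,\E[|u_{j,k}^{\LT}|^2]$.

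It then remains to insert two properties of the discrete heat kernel. First, $G^N(t)$ is sub-Markovian, $\sum_k G_{nk}^N(t)\le1$ with $G_{nk}^N(t)\ge0$ (the discrete maximum principle already used in Section~\ref{sec:spatial-discretization}), which bounds the deterministic term by $\|u_0\|_\infty^2$. Second, the squared-kernel sum satisfies $N\sum_k G_{nk}^N(t)^2=N G_{nn}^N(2t)=\int_0^1 G^N(t,x_n,y)^2\,\diff y\le C\,t^{-1/2}$, the discrete analogue of the continuous bound $\int_0^1 G(t,x,y)^2\,\diff y\le C t^{-1/2}$, available from the analysis of \cite{MR1644183}. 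Writing $A_m=\sup_{1\le n\le N-1}\E[|u_{m,n}^{\LT}|^2]$ and noting $t_{m-j}=t_m-t_j$, these combine into the weakly singular convolution inequality
\[
A_m\le\|u_0\|_\infty^2+C\,\tau\sum_{j=0}^{m-1}(t_m-t_j)^{-1/2}A_j.
\]

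Finally I would close the estimate with the discrete Gronwall lemma for weakly singular kernels (the auxiliary inequality of Appendix~\ref{sec:app}). The feature that makes this work uniformly in $M$ is that $\tau\sum_{i=1}^{m}(i\tau)^{-1/2}=\tau^{1/2}\sum_{i=1}^m i^{-1/2}\le 2\sqrt{t_m}\le 2\sqrt T$ is bounded independently of $\tau$, so the singular kernel is integrable and the Gronwall argument delivers $A_m\le C_{\gamma,T}(1+\|u_0\|_\infty^2)$ uniformly in $0\le m\le M$ and $1\le n\le N-1$, which is the claim. The main obstacle is precisely the point flagged at the outset: one must avoid a one-step Gronwall iteration (which blows up) and instead combine the discrete It\^o isometry with the $t^{-1/2}$ decay of the squared discrete heat kernel. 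Establishing that decay uniformly in $N$ is the one genuinely analytic input, and the CFL condition $\tau\le\gamma h$ is used exactly to keep $N\tau$ bounded so that $e^{Nf^2\tau}-1=O(N\tau)$.
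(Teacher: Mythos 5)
Your proof is correct, and it arrives at the same weakly singular discrete Gr\"onwall inequality as the paper by using the same three quantitative inputs: the CFL condition to control $e^{Nf(\cdot)^2\tau}\le e^{\gamma\Lg^2}$, the contraction property~\eqref{eq:aux1} for the deterministic term, and the kernel decay~\eqref{eq:aux2} for the noise term. The implementation differs: you unroll the recursion into a fully discrete Duhamel formula and replace It\^o's isometry by the pairwise orthogonality of the centred increments $\eta_{j,k}$ (which does hold, by conditioning on $\mathcal{F}_{t_{\max(j,j')}}$ for distinct time indices and on $\mathcal{F}_{t_j}$ for distinct space indices at the same time step). The paper instead works with the continuous-time auxiliary process $u^{\LT}(t,x)$ of~\eqref{eq:schemeaux3} and applies the genuine It\^o isometry to its mild formulation, bounding $\E[|u^{\LT}(s,\cdot)|^2]$ for $s\in[t_k,t_{k+1})$ by $e^{\gamma\Lg^2}\E[|u_{k,\cdot}^{\LT}|^2]$ via the explicit geometric-Brownian-motion factor. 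What the paper's route buys is the moment bound~\eqref{eq:proofmoment2}--\eqref{eq:proofmoment3} at \emph{non-grid} times, which is precisely what Lemma~\ref{lem:regul-uLT} and the term $E_{m,n}^{(1,3)}$ in the proof of Theorem~\ref{theo:main} require; your argument yields the bound only at grid points and would need the same one-extra-step extension (controlling $v_{m,n}^{M,N,1}(t)$ for $t\in(t_m,t_{m+1})$) before it could feed into the convergence analysis. Two small slips, neither affecting validity: the singular Gr\"onwall lemma is~\eqref{eq:gronwall}, quoted in Section~\ref{sec:ineqs} from the literature, not the inequality proved in Appendix~\ref{sec:app} (which is~\eqref{eq:aux3}); and the identity $N\sum_kG^N_{nk}(t)^2=NG^N_{nn}(2t)$, while true by symmetry of $G^N(t)$, is not needed since~\eqref{eq:aux2} already supplies the $t^{-1/2}$ bound.
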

The proof of this proposition also provides moment bounds for a space-time continuous version $u^{\LT}(t,x)$,
defined by equation~\eqref{eq:schemeaux3}, of the Lie--Trotter splitting scheme~\eqref{eq:scheme}.

We are now in position to state the main convergence result of this article. For ease of presentation, we only consider errors at space-time grid points.

\begin{theorem}\label{theo:main}
Assume that Assumptions~\ref{ass:u0} and~\ref{ass:g} are satisfied. Let the sequence $u_{0}^{\LT},\ldots,u_{M}^{\LT}$ be given by the Lie--Trotter scheme~\eqref{eq:scheme}, and let $\bigl(u^N(t)\bigr)_{t\ge 0,0\le n\le N}$ be given by the spatial semi-discretization scheme~\eqref{eq:spatialscheme}.

For all $\gamma\in(0,\infty)$ and $T\in(0,\infty)$, there exists $C_{\gamma,T}(u_0)\in(0,\infty)$ such that for all $\tau=T/M$ and $h=1/N$ satisfying the condition $\tau\le \gamma h$, one has
\begin{equation}\label{eq:error}
\underset{0\le m\le M}\sup~\underset{0\le n\le N}\sup~\left(\E[|u_{m,n}^{\LT}-u_n^N(t_m)|^2]\right)^{\frac12}\le C_{\gamma,T}(u_0)\left(\tau^{\frac{1}{4}}
+\left(\frac\tau h\right)^{\frac12}\right).
\end{equation}
In addition, for all $\tau=T/M$ and $h=1/N$ satisfying the condition $\tau\le \gamma h^2$, one has
\begin{equation}\label{eq:error2}
\underset{0\le m\le M}\sup~\underset{0\le n\le N}\sup~\left(\E[|u_{m,n}^{\LT}-u_n^N(t_m)|^2]\right)^{\frac12}\le C_{\gamma,T}(u_0)\tau^{\frac{1}{4}}.
\end{equation}
\end{theorem}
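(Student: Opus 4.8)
The plan is to first reduce the second estimate \eqref{eq:error2} to the first one \eqref{eq:error}: if $\tau\le\gamma h^2$ then $\tau^{1/4}\le\gamma^{1/4}h^{1/2}$, hence $(\tau/h)^{1/2}=\tau^{1/4}\cdot\bigl(\tau^{1/4}h^{-1/2}\bigr)\le\gamma^{1/4}\tau^{1/4}$, so the two contributions in \eqref{eq:error} collapse to $C\tau^{1/4}$. It therefore suffices to prove \eqref{eq:error} under the weak condition $\tau\le\gamma h$, for which the moment bound of Proposition~\ref{propo:moment} is available. I would work with the space-time continuous interpolation $u^{\LT}(t,x)$ of equation~\eqref{eq:schemeaux3}, which agrees with the scheme \eqref{eq:scheme} at the grid points and, by unrolling the recursion together with the exponential-martingale form \eqref{eq:scheme-subsystem1-solution} of the geometric Brownian motion, satisfies a mild identity of the form $u^{\LT}(t)=G^N(t)u^N(0)+\int_0^t G^N(t-\ell^M(s))\sqrt N\,\mathrm{diag}\!\bigl(f(u^{\LT}(\ell^M(s)))\bigr)v^{M,N,1}(s)\,\mathrm dW^N(s)$. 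Subtracting this from the mild form \eqref{eq:uN} of $u^N$, the deterministic parts cancel because $u_0^{\LT}=u^N(0)$, and the It\^o--Walsh isometry expresses $\E[|u^{\LT}(t_m,x_n)-u^N(t_m,x_n)|^2]$ as the space-time integral of the expected square of the integrand difference.

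The crucial algebraic step is to exploit the martingale property of the geometric Brownian motion rather than its pathwise size. Since $\E[v^{M,N,1}(s)\mid\mathcal F_{\ell^M(s)}]=u^{\LT}(\ell^M(s))$, I would split $f(u^{\LT}(\ell^M(s)))v^{M,N,1}(s)=g(u^{\LT}(\ell^M(s)))+\psi(s)$ into its conditional mean and a fluctuation $\psi(s)$, where $\psi_j(s)=f(u_{\ell^M,j}^{\LT})\int_{\ell^M(s)}^s\sqrt N f(u_{\ell^M,j}^{\LT})v_j^{M,N,1}(r)\,\mathrm dW_j^N(r)$ is itself an inner It\^o integral. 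This writes the integrand difference as the sum of three terms: a fluctuation term $G^N(t_m-\ell^M(s))\psi(s)$, a kernel-mismatch term $\bigl(G^N(t_m-\ell^M(s))-G^N(t_m-s)\bigr)g(u^{\LT}(\ell^M(s)))$, and a coefficient term $G^N(t_m-s)\bigl(g(u^{\LT}(\ell^M(s)))-g(u^N(s))\bigr)$, the last of which I further decompose through $g(u^N(\ell^M(s)))$ into a contribution controlled by the temporal regularity \eqref{eq:regul-uN} of $u^N$ and a contribution proportional to the error at the previous grid time. It is worth stressing that this decomposition only uses that $g$ is Lipschitz and $f$ is bounded, never that $f$ is Lipschitz (which need not hold under Assumption~\ref{ass:g}); the freezing of $f$ together with the exact geometric-Brownian-motion solve is precisely what makes this possible.

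Each term is then estimated by the It\^o isometry together with the discrete heat-kernel identities $\sum_{j}\bigl(G_{nj}^N(t)\bigr)^2=G_{nn}^N(2t)$ and $\int_0^{T}\!\sum_j N\bigl(G_{nj}^N(t_m-s)\bigr)^2\,\mathrm ds=\mathrm O(1)$, which encode the space-time $L^2$ smoothing of the kernel against space-time white noise. The fluctuation term obeys $\E[\psi_j(s)^2]\le C N(s-\ell^M(s))\le CN\tau$, so, using Proposition~\ref{propo:moment}, it contributes $\mathrm O(N\tau)=\mathrm O(\tau/h)$ to the second moment, i.e.\ exactly the $(\tau/h)^{1/2}$ term in \eqref{eq:error}. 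The kernel-mismatch term is handled spectrally via $\bigl(1-e^{-|\mu_k|(s-\ell^M(s))}\bigr)^2\le|\mu_k|(s-\ell^M(s))$, where $\mu_k$ are the eigenvalues of $N^2D^N$, which after summation yields again a $(\tau/h)^{1/2}$ bound; the temporal-regularity contribution gives the $\tau^{1/4}$ term; and the previous-error contribution is absorbed into a weakly singular Gronwall inequality in $m$, whose kernel $(t_m-t_k)^{-1/2}$ comes from $N G_{nn}^N(2(t_m-t_k))$. Summing the estimates and applying the Gronwall lemma yields \eqref{eq:error}.

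The main obstacle is the fluctuation term. Each per-step increment of the frozen geometric Brownian motion is, because of the $\sqrt N$ scaling of the discrete noise, of size $\mathrm O(1)$ rather than $\mathrm o(1)$ in $L^2$, so a naive pathwise comparison of $v^{M,N,1}(s)$ with $u^{\LT}(\ell^M(s))$ cannot give convergence. The resolution, and the technical heart of the proof, is to recognise this fluctuation as a genuine double stochastic integral whose second moment carries the extra factor $N(s-\ell^M(s))$; it is this factor, combined with the $\mathrm O(1)$ space-time kernel mass, that produces precisely the non-vanishing $(\tau/h)^{1/2}$ term and explains why the CFL restriction here stems from the discretization of the space-time white noise rather than from the (exactly solved) Laplacian. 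Keeping track of this term through the singular Gronwall argument, while maintaining the moment bounds under the weak CFL condition, is the delicate part of the analysis.
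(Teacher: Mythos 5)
Your proposal is correct and follows essentially the same route as the paper: the same mild-form error decomposition into a kernel-mismatch term, a within-step fluctuation term of the frozen geometric Brownian motion (your $\psi$ is exactly the paper's $E_{m,n}^{(1,3)}$, controlled by Lemma~\ref{lem:regul-uLT}), a temporal-regularity term for $u^N$ via~\eqref{eq:regul-uN}, and a previous-error term absorbed by the singular discrete Gr\"onwall inequality, all under the moment bounds of Proposition~\ref{propo:moment}. The only (harmless) deviation is that your crude spectral bound $\bigl(1-e^{-\lambda\delta}\bigr)^2\le\lambda\delta$ gives only $(\tau/h)^{1/2}$ for the kernel-mismatch term, whereas the paper's Appendix estimate~\eqref{eq:aux3} yields the sharper $\tau^{1/4}$; since $(\tau/h)^{1/2}$ already appears in~\eqref{eq:error}, the stated conclusions are unaffected.
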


Proving~\eqref{eq:error2} from the error estimate~\eqref{eq:error} under the stronger condition $\tau\le \gamma h^2$ is straightforward.

Combining Theorem~\ref{theo:main} and Proposition~\ref{propo:error-spatial}, one directly obtains error estimates for the fully-discrete scheme.

\begin{corollary}\label{cor:errorfull}
Consider the setting and assumptions of Theorem~\ref{theo:main}.
For all $\gamma\in(0,\infty)$ and $T\in(0,\infty)$, there exists $C_{\gamma,T}(u_0)\in(0,\infty)$ such that for all $\tau=T/M$ and $h=1/N$ satisfying the condition $\tau\le \gamma h^2$, one has
\begin{equation}\label{eq:errorfull}
\underset{0\le m\le M}\sup~\underset{0\le n\le N}\sup~\left(\E[|u_{m,n}^{\LT}-u(t_m,x_n)|^2]\right)^{\frac12}\le C_{\gamma,T}(u_0)h^{\frac{1}{2}}.
\end{equation}
\end{corollary}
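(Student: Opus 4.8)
The plan is to obtain the fully-discrete bound~\eqref{eq:errorfull} by a single triangle inequality that separates the temporal error (controlled by Theorem~\ref{theo:main}) from the spatial error (controlled by Proposition~\ref{propo:error-spatial}). First I would write, for every $m\in\{0,\ldots,M\}$ and $n\in\{0,\ldots,N\}$,
\[
u_{m,n}^{\LT}-u(t_m,x_n)=\bigl(u_{m,n}^{\LT}-u_n^N(t_m)\bigr)+\bigl(u_n^N(t_m)-u(t_m,x_n)\bigr),
\]
and recall that $u_n^N(t_m)=u^N(t_m,x_n)$. Taking the $L^2(\Omega)$ norm and applying Minkowski's inequality then yields
\[
\bigl(\E[|u_{m,n}^{\LT}-u(t_m,x_n)|^2]\bigr)^{\frac12}\le \bigl(\E[|u_{m,n}^{\LT}-u_n^N(t_m)|^2]\bigr)^{\frac12}+\bigl(\E[|u^N(t_m,x_n)-u(t_m,x_n)|^2]\bigr)^{\frac12}.
\]

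Next I would bound the two terms separately, uniformly in $m$ and $n$. For the second (spatial) term, Proposition~\ref{propo:error-spatial} directly gives the bound $C_T(u_0)h^{\frac12}$, since the grid points $(t_m,x_n)$ are particular points of $[0,T]\times[0,1]$ and the estimate~\eqref{eq:errorspatial} is uniform in time and space. For the first (temporal) term, I would invoke estimate~\eqref{eq:error2} of Theorem~\ref{theo:main}, which is valid precisely under the stronger CFL condition $\tau\le\gamma h^2$ assumed in the corollary, to obtain the bound $C_{\gamma,T}(u_0)\tau^{\frac14}$.

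The only remaining step — and the point where the choice of the CFL condition $\tau\le\gamma h^2$ becomes essential — is to convert the temporal rate $\tau^{\frac14}$ into the spatial rate $h^{\frac12}$. Under this condition one has $\tau^{\frac14}\le(\gamma h^2)^{\frac14}=\gamma^{\frac14}h^{\frac12}$, so that the temporal contribution is also of order $h^{\frac12}$. Summing the two $\mathrm{O}(h^{\frac12})$ contributions and taking the supremum over $0\le m\le M$ and $0\le n\le N$ gives~\eqref{eq:errorfull} with a constant depending only on $\gamma$, $T$, and $u_0$. I do not expect any genuine obstacle here: the corollary is a clean synthesis of the two preceding results, and the role of the condition $\tau\le\gamma h^2$ is exactly to balance the temporal order $1/4$ against the spatial order $1/2$ so that the two error sources contribute at the same rate.
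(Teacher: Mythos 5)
Your proposal is correct and follows essentially the same route as the paper: a triangle inequality splitting the error into the temporal part bounded by $C_{\gamma,T}(u_0)\tau^{\frac14}$ from Theorem~\ref{theo:main} and the spatial part bounded by $C_T(u_0)h^{\frac12}$ from Proposition~\ref{propo:error-spatial}, followed by the observation that $\tau\le\gamma h^2$ gives $\tau^{\frac14}\le\gamma^{\frac14}h^{\frac12}$. Nothing is missing.
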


We postpone the proofs of the above results to Section~\ref{sec:proofs}.

\section{Numerical experiments}\label{sec:num}
In this section we provide numerical experiments to support and verify the above theoretical results.
Recall that $\tau=T/M>0$ is the time step size and $h = 1/N > 0$ is the space mesh size.
We compare the proposed Lie--Trotter splitting scheme~\eqref{eq:scheme}, denoted LT below,
to the following classical time integrators when applied to the spatially discretized system~\eqref{eq:spatialscheme}:
\begin{itemize}
\item the Euler--Maruyama scheme (denoted EM below), see for instance \cite{MR1803132}
$$
u^{\rm EM}_{m+1}=u^{\rm EM}_m+\tau N^2D^Nu^{\rm EM}_m+\sqrt{N}g(u^{\rm EM}_m)\Delta_mW,
$$
\item the semi-implicit Euler--Maruyama scheme (denoted SEM below), see for instance \cite{MR1699161}
$$
u^{\rm SEM}_{m+1}=u^{\rm SEM}_m+\tau N^2D^Nu^{\rm SEM}_{m+1}+\sqrt{N}g(u^{\rm SEM}_m)\Delta_mW,
$$
\item the stochastic exponential Euler integrator (denoted SEXP below), see for instance \cite{MR3047942}
$$
u^{\rm SEXP}_{m+1}=e^{\tau N^2D^N}\left(u^{\rm SEXP}_m+\sqrt{N}g(u^{\rm SEXP}_m)\Delta_mW\right).
$$
\end{itemize}

\subsection{Preservation of the positivity}

We start by illustrating the positivity-preserving property of the Lie--Trotter scheme (LT) and show the lack of positivity-preserving
behavior for the Euler--Maruyama scheme (EM), the semi-implicit Euler--Maruyama scheme (SEM), and the stochastic exponential scheme (SEXP).
To do this, we use the same noise samples for all time integrators when applied to the space-discretization of the SPDE~\eqref{eq:spde} as described in Section~\ref{sec:spatial-discretization} with the initial condition $u_{0}=\sin(\pi x)$ and final time $T=20$. We consider this problem with
the three choices of multiplicative term given by $g(v)=\lambda v$,
$g(v)= \lambda \ln \left( 1 + v \right)$, and $g(v) = \lambda \left(v + \sin(v) \right)$. The real-valued parameter $\lambda>0$ is introduced
to avoid the need to run numerical experiments with very long time horizons $T$ in order to obtain negative values for the numerical schemes SEXP and SEM.
We remark that $g(v)= \lambda \ln \left( 1 + v \right)$ is well-behaved for $v \geq 0$ but problems may occur if $v \leq -1$.
Since the proposed LT scheme is guaranteed to preserve positivity, this is not problematic. However,
this could happen for the time integrators SEXP, SEM or EM.
The numerical results are presented in Tables~\ref{table1}~and~\ref{table2}, where the notation
$k/50$ indicates that $k$ out of $50$ samples remain positive.

In Table~\ref{table1}, we let $g(v)=2.5v$ and we consider $50$ sample paths for each of the time integrators
for several choices of the discretization parameters
$\tau$ and $h$. Table~\ref{table1} confirms that the LT scheme preserves positivity. This is not the case for SEXP, SEM and EM.
We observe that fewer samples of SEXP and SEM contain negative values for small time steps $\tau$.
This is expected as each of the time integrators SEXP, SEM, and even EM, converges (for every fixed $h$) to the exact,
everywhere positive, solution of the space-discretized system of SDEs in equation~\eqref{eq:spatialscheme}.

\begin{table}[h]
\begin{center}
\begin{tabular}{|c c c c c|}
 \hline
 $(\tau,h)$ & LT & SEXP & SEM & EM\\
 \specialrule{.1em}{.05em}{.05em}
 $(10^{-3},10^{-2})$ & $50/50$ & $0/50$ & $0/50$ & $0/50$ \\
 \hline
 $(10^{-4},10^{-3})$ & $50/50$ & $50/50$ & $50/50$ & $0/50$ \\
 \hline
 $(10^{-5},10^{-3})$ & $50/50$ & $50/50$ & $50/50$ & $0/50$ \\
 \hline
\end{tabular}
\end{center}
\caption{Proportion of samples containing only positive values out of $50$ simulated sample paths for
the Lie--Trotter splitting scheme (LT), the stochastic exponential Euler integrator (SEXP),
the semi-implicit Euler--Maruyama scheme (SEM), and Euler--Maruyama scheme (EM) for
the diffusion coefficient $g(v)=2.5v$ and several choices of discretization parameters $\tau$ and $h$.}\label{table1}
\end{table}

In Table~\ref{table2} we instead fix the discretization parameters $\tau=10^{-5}$ and $h=10^{-3}$ and consider different types of multiplicative terms $g(v)$.
We again use $50$ samples in each of the entries of Table~\ref{table2}. From the results of Table~\ref{table2}, one can observe the poor performance of the EM scheme
in all cases. This table also illustrates the fact that increasing the size of the multiplicative term prevents SEM and SEXP to remain positive.
It should be clear that increasing the value of $\lambda$ even more, or the length of the time interval, would hinder the numerical solutions
to stay positive for all time integrators except for the proposed Lie--Trotter splitting scheme.

\begin{table}[h]
\begin{center}
\begin{tabular}{|c c c c c|}
 \hline
 $g(v)$ & LT & SEXP & SEM & EM \\
 \specialrule{.1em}{.05em}{.05em}
 $2.5 \ln(1+v) $ & $50/50$ & $50/50$ & $50/50$ & $0/50$ \\
 \hline
 $3.5 \ln(1+v) $ & $50/50$ & $50/50$ & $50/50$ & $0/50$ \\
 \hline
 $5 \ln(1+v) $ & $50/50$ & $47/50$ & $26/50$ & $0/50$ \\
 \hline
 $2.5 v $ & $50/50$ & $50/50$ & $50/50$ & $0/50$ \\
 \hline
 $3.5 v $ & $50/50$ & $50/50$ & $50/50$ & $0/50$ \\
 \hline
 $5 v $ & $50/50$ & $4/50$ & $50/50$ & $0/50$ \\
 \hline
 $2.5 \left( v + \sin(v) \right)$ & $50/50$ & $44/50$ & $50/50$ & $0/50$ \\
 \hline
 $3.5 \left( v + \sin(v) \right)$ & $50/50$ & $0/50$ & $0/50$ & $0/50$ \\
 \hline
 $5 \left( v + \sin(v) \right)$ & $50/50$ & $0/50$ & $0/50$ & $0/50$ \\
 \hline
\end{tabular}
\end{center}
\caption{Proportion of samples containing only positive values out of $50$ simulated sample paths for
the Lie--Trotter splitting scheme (LT), the stochastic exponential Euler integrator (SEXP),
the semi-implicit Euler--Maruyama scheme (SEM), and the Euler--Maruyama scheme (EM)
for several choices of diffusion terms $g(v)$. The discretization parameters are $\tau=10^{-5}$ and $h=10^{-3}$.}\label{table2}
\end{table}

\subsection{Mean-square errors}
For the next numerical experiment, we discretize the stochastic heat equation~\eqref{eq:spde} with initial value $u_0(x)=\sin(\pi x)$
by a finite-difference scheme in space with mesh size $h=2^{-8}$. The resulting system of stochastic differential equations~\eqref{eq:spatialscheme} is then
discretized by the time integrators LT, SEXP, and SEM. The classical EM scheme is not appropriate in this setting and numerical results are thus not presented.
The following choices for the function $g$ are considered:
$g(v)=v$ and $g(v)=\frac{v}{1+v^2}$ and $g(v)=\ln(1+v)$, for $v\ge0$, and $g(v)=v\exp(-v^2)$.
Figure~\ref{fig:strong1} displays, in a loglog plot, the mean-square errors
$$
\underset{0\le m\le M}\sup~\underset{0\le n\le N}\sup~\bigl(\E[|u_{m,n}^{\text{num}}-u^{\text{ref}}(t_m,x_n)|^2]\bigr)^{\frac12}
$$
measured at the space-time grid points $(t_m,x_n)$ for the time interval $[0,0.5]$. The reference solution $u^{\text{ref}}$ is computed using the LT splitting scheme
with time step size $\tau_{\text{ref}}=2^{-16}$. Here, $200$ samples have been used to approximate the expectations. We have
checked that the Monte Carlo error is negligible to observe mean-square convergence. In this figure, one can observe a rate of convergence $1/2$ instead of $1/4$ in the mean-square error estimates~\eqref{eq:error2} for the splitting scheme in Theorem~\ref{theo:main}. This is related to the mean-square error estimates~\eqref{eq:error} and the role of the CFL condition
$\tau\le \gamma h^2$ to obtain~\eqref{eq:error2}.

\begin{figure}[h]
\begin{subfigure}{.35\textwidth}
  \centering
  \includegraphics[width=\textwidth]{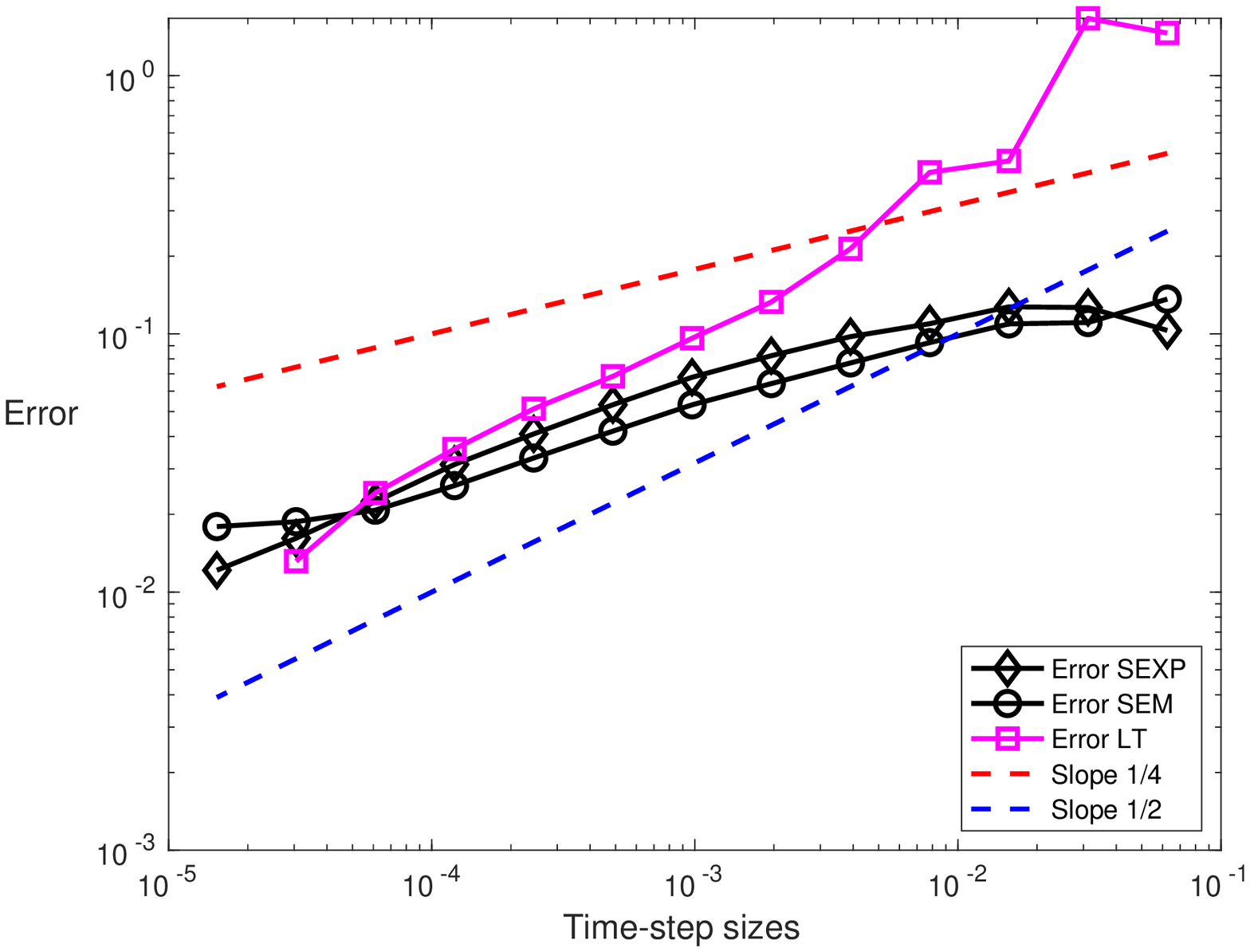}
  \caption{$g(v)=v$}
\end{subfigure}%
\begin{subfigure}{.35\textwidth}
  \centering
  \includegraphics[width=\textwidth]{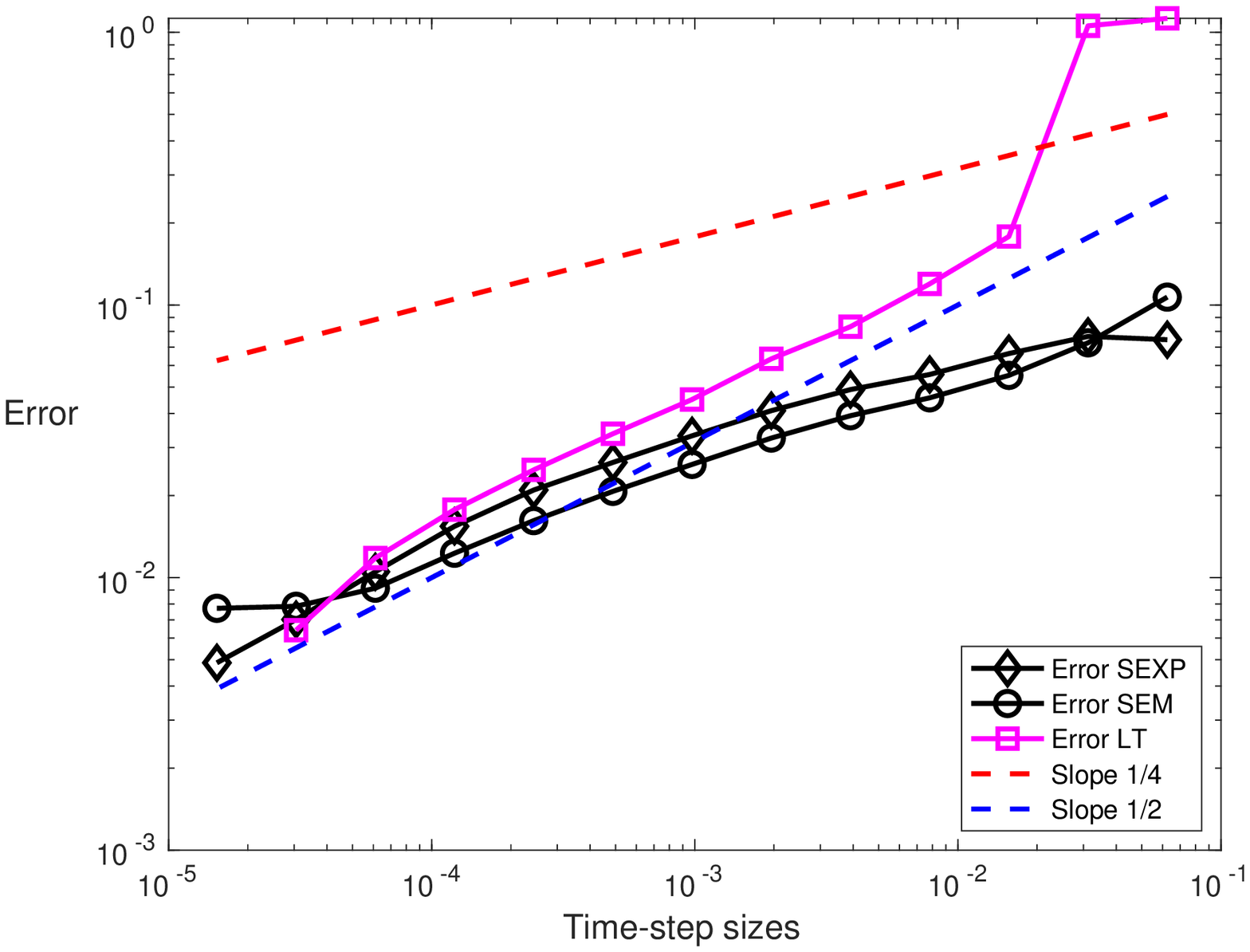}
  \caption{$g(v)=\frac{v}{(1+v^2)}$}
\end{subfigure}%
~

\begin{subfigure}{.35\textwidth}
  \centering
  \includegraphics[width=\textwidth]{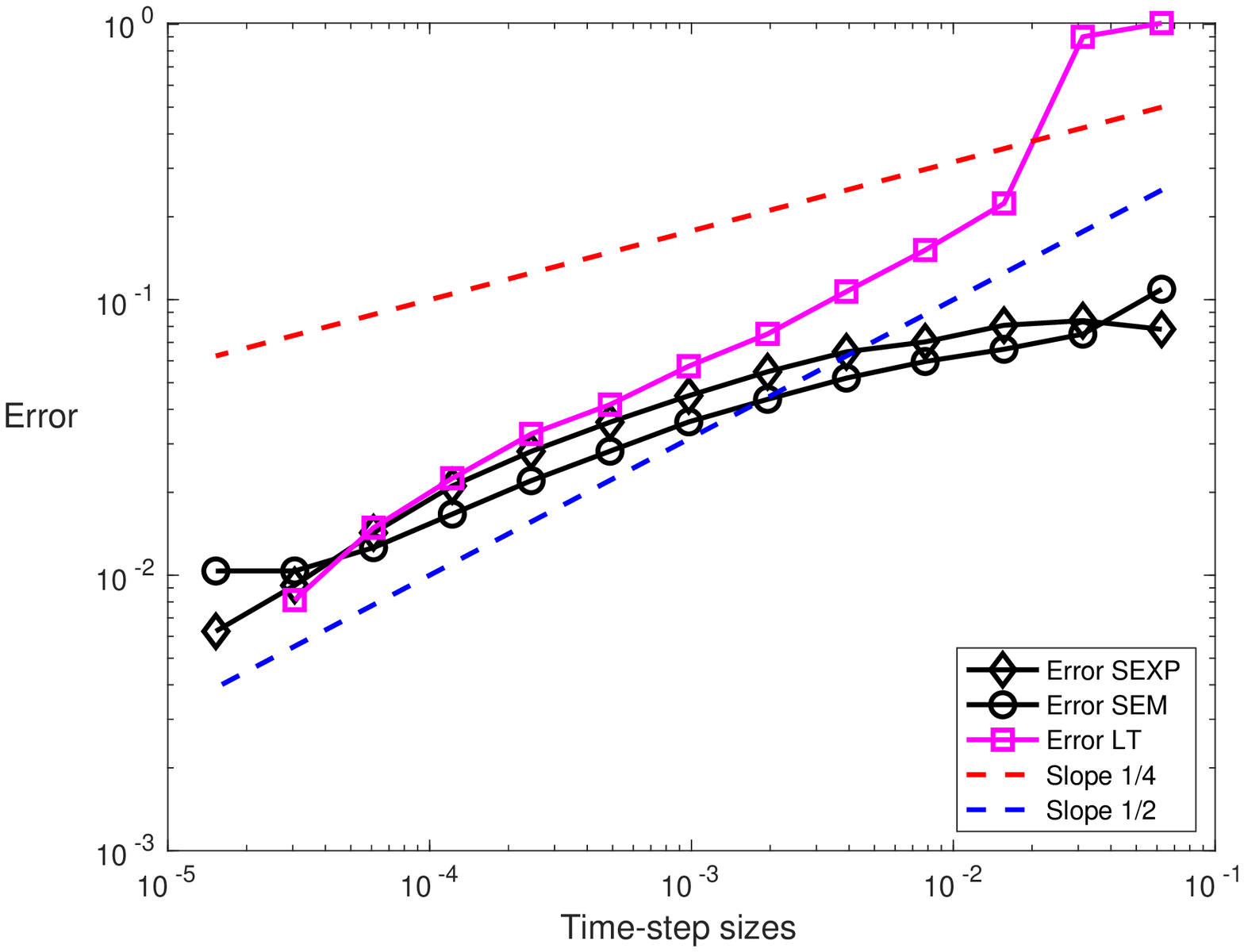}
  \caption{$g(v)=\ln(1+v)$}
\end{subfigure}
\begin{subfigure}{.35\textwidth}
  \centering
  \includegraphics[width=\textwidth]{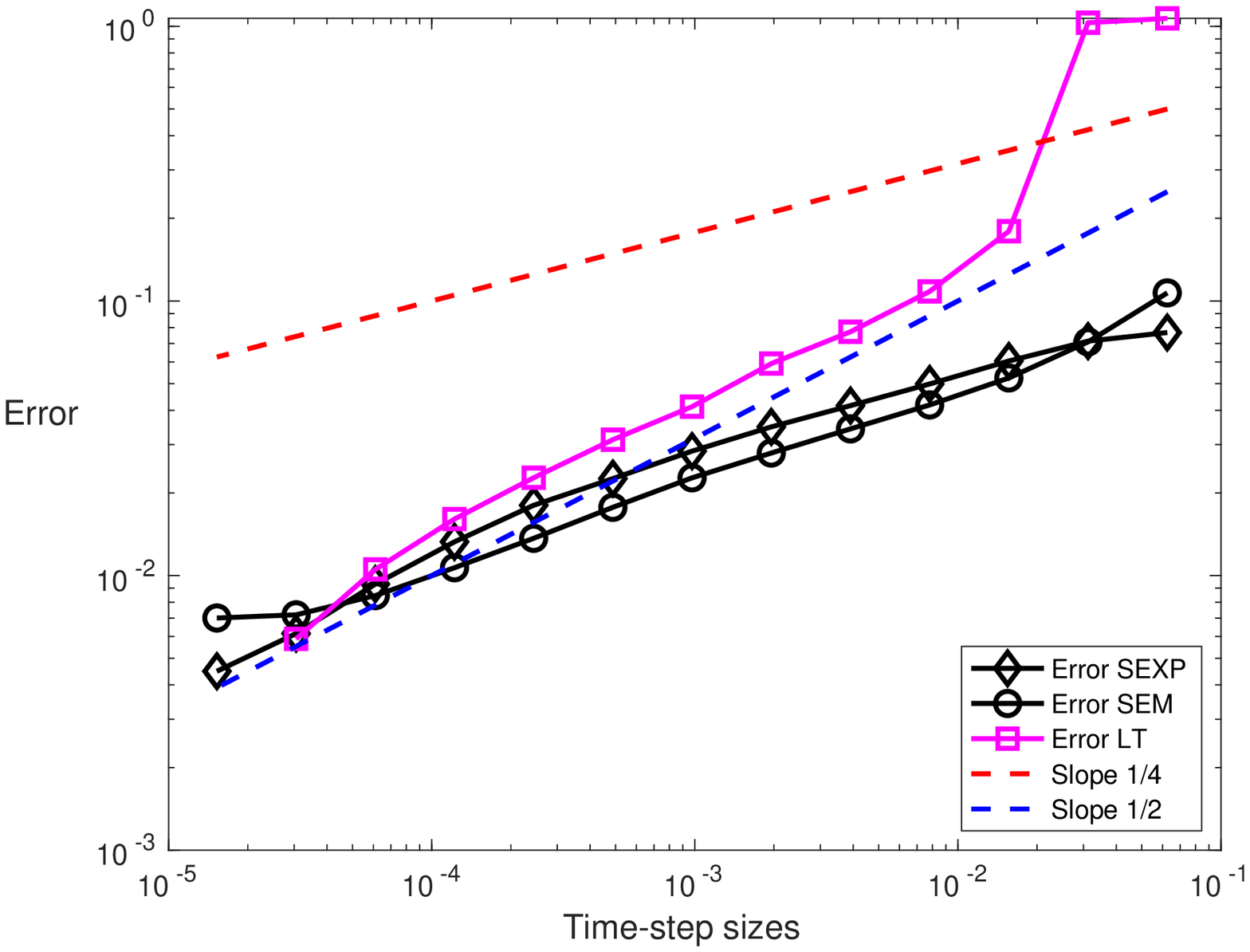}
  \caption{$g(v)=v\exp(-v^2)$}
\end{subfigure}

\caption{Mean-square errors on the time interval $[0,0.5]$ of the splitting scheme (LT), the stochastic exponential Euler integrator (SEXP), and the semi-implicit Euler--Maruyama scheme (SEM). Mesh size $h=2^{-8}$ and average over $200$ samples.
}
\label{fig:strong1}
\end{figure}

To illustrate this, we compute the mean-square errors of the Lie--Trotter splitting scheme when applied to
the finite difference discretization of the stochastic heat equation with different values of the mesh size, namely $h=2^{-4}, 2^{-6}, 2^{-8}, 2^{-10}$.
This is presented only for the two nonlinearities $g(v)=1.5v$ and $g(v)=1.5\frac{v}{1+v^2}$.
We have used $200$ samples to approximate the expectations. The other parameters are the same as in the previous numerical experiments.
The results are presented in Figure~\ref{fig:changeN}. In these experiments we observe upper bounds which are not uniform with respect to $h$, in fact we observe the contribution of the error term $\tau^{\frac12}h^{-\frac12}$ in the mean-square error estimates~\eqref{eq:error}.

\begin{figure}[h]
\begin{subfigure}{.45\textwidth}
  \centering
  \includegraphics[width=\textwidth]{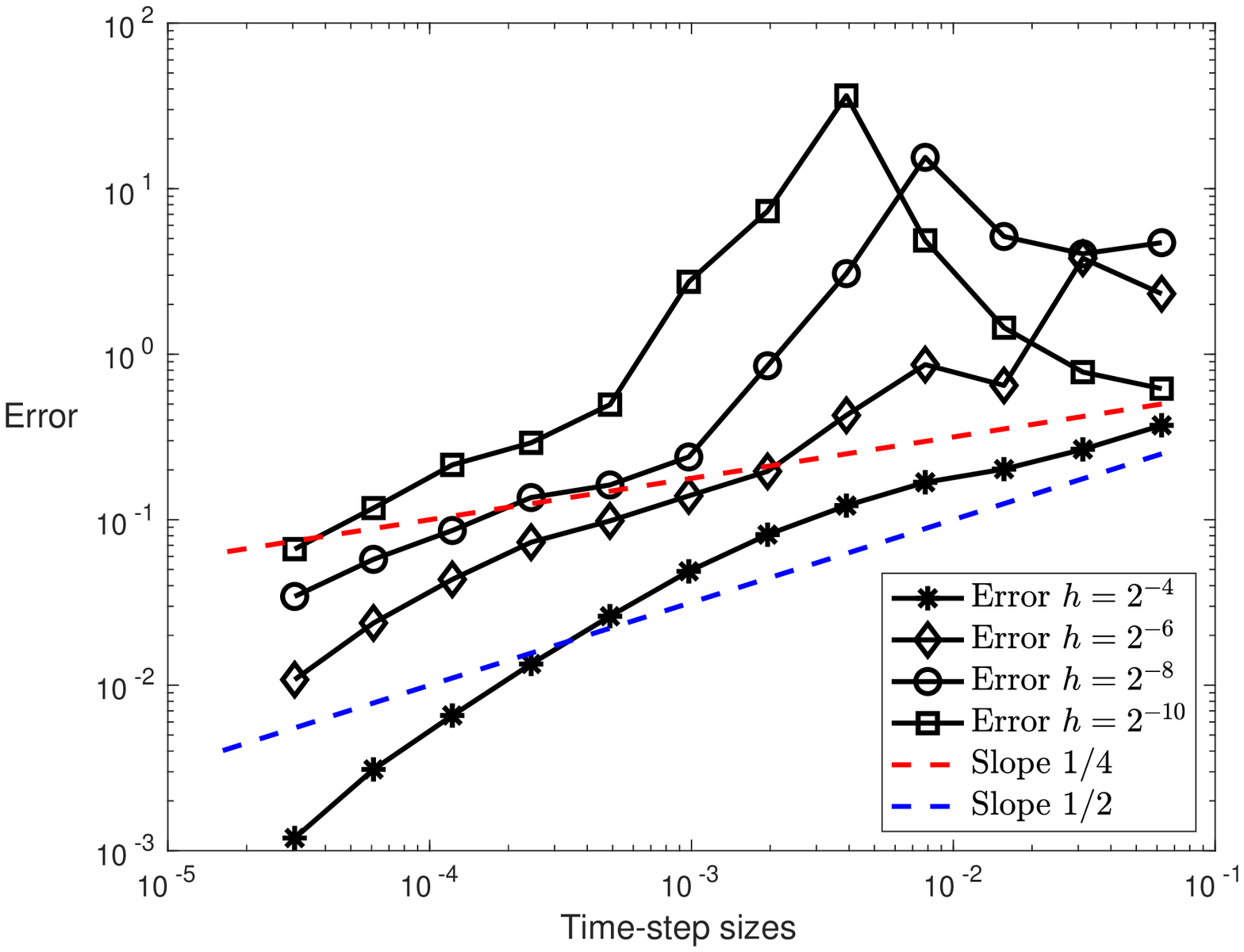}
  \caption{$g(v)=v$}
\end{subfigure}%
\begin{subfigure}{.45\textwidth}
  \centering
  \includegraphics[width=\textwidth]{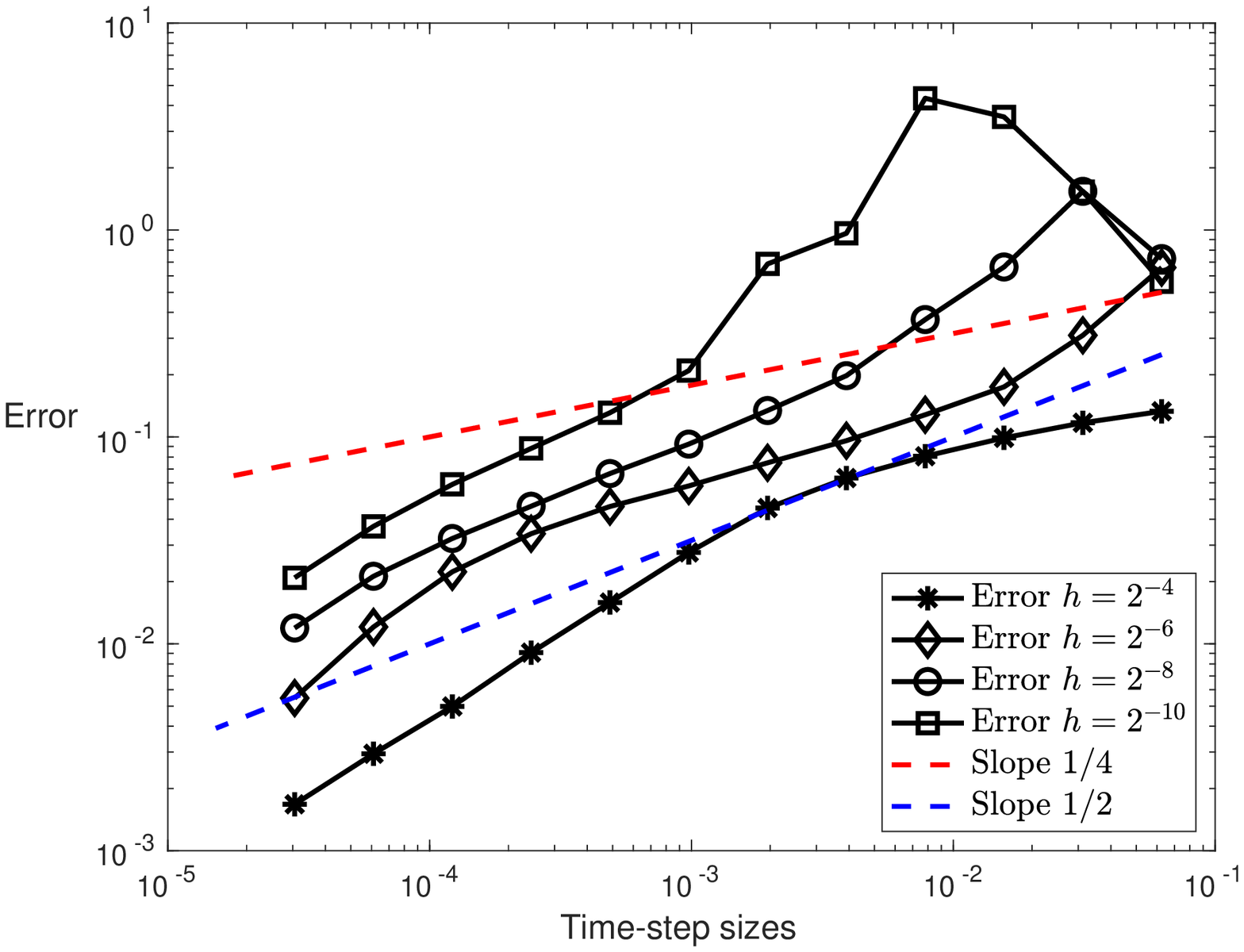}
  \caption{$g(v)=\frac{v}{(1+v^2)}$}
\end{subfigure}%

\caption{Mean-square errors on the time interval $[0,0.5]$ of the splitting scheme for several values of the spatial mesh $h$. Average over $200$ samples.}
\label{fig:changeN}
\end{figure}

In the final numerical experiment, we consider the same parameters as above and the function $g(v)=v^{1.25}$.
Observe that this nonlinearity is not globally Lipschitz continuous and is thus not covered by the the results from Section~\ref{sec:results}. A convergence plot for the splitting scheme~\eqref{eq:scheme}
is provided in Figure~\ref{fig:strong2}. As above, we observe a mean-square order of convergence $1/2$, but which should not be uniform with respect to $h$, similarly to what is observed in Figure~\ref{fig:changeN}. To prove such rate of convergence is beyond the scope of this paper and will be the subject of a future work.

\begin{figure}[h]
  \centering
  \includegraphics[width=.45\textwidth]{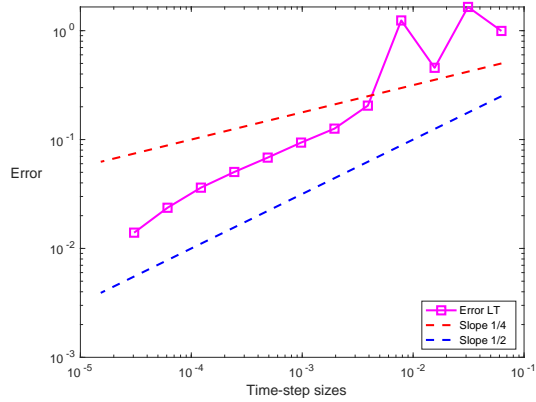}
\caption{Mean-square errors on the time interval $[0,0.5]$ of the splitting scheme (LT) when applied to the stochastic heat equation~\eqref{eq:spde}
with $g(v)=v^{1.25}$. Mesh size $h=2^{-8}$ and average over $200$ samples.}
\label{fig:strong2}
\end{figure}

\section{Proofs of the main results}\label{sec:proofs}

The objective of this section is to provide the proof of the results stated in Section~\ref{sec:results}, namely the moment bounds in Proposition~\ref{propo:moment} and the mean-square error estimates in Theorem~\ref{theo:main} and in Corollary~\ref{cor:errorfull}. We also prove Proposition~\ref{propo:positivity-exact}, which ensures positivity of the exact solution. Preliminary auxiliary tools are given in Sections~\ref{sec:auxiliaryprocess} and~\ref{sec:ineqs}, before proceeding with the detailed proofs.

\subsection{Auxiliary process}\label{sec:auxiliaryprocess}

In this section, for any $M\in\N$ and $N\in\N$, we define an auxiliary stochastic process $\bigl(u^{\LT}(t,x)\bigr)_{t\in[0,T],x\in[0,1]}$ satisfying $u^{\LT}(t_m,x_n)=u_{m,n}^{\LT}$ for all $m\in\{0,\ldots,M\}$ and $n\in\{1,\ldots,N-1\}$. The auxiliary process $u^{\LT}$ is piecewise continuous with respect to the spatial variable $x$, while its temporal evolution on each interval $(t_m,t_{m+1})$ follows a stochastic differential equation similar to~\eqref{eq:scheme-subsystem1}.

Recall that the auxiliary mappings $\kappa^N\colon[0,1]\to\{x_0,\ldots,x_N\}$ and $\ell^M\colon[0,T]\to\{t_0,\ldots,t_M\}$ are defined in Sections~\ref{sec:spatial-discretization} and~\ref{sec:num} respectively.

Let $n\in\{1,\ldots,N-1\}$ and $m\in\{0,\ldots,M-1\}$, then for all $t\in[t_m,t_{m+1}]$ set
\begin{equation}\label{eq:schemeaux1}
u_{m,n}^{\LT}(t)=\sum_{k=1}^{N-1}G_{nk}^{N}(\ell^{M}(t)-t_m)v_{m,k}^{M,N,1}(t),
\end{equation}
where $v_{m,n}^{M,N,1}(t)=\exp\left(\sqrt{N}f(u_{m,n}^{\LT})\bigl(W_n^N(t)-W_n^N(t_m)\bigr)-\frac{Nf(u_{m,n}^{\LT})^2(t-t_m)}{2}\right)u_{m,n}^{\LT}$
is the explicit expression~\eqref{eq:scheme-subsystem1-solution} of the solution at time $t\in[t_m,t_{m+1}]$ of the auxiliary stochastic subsystem~\eqref{eq:scheme-subsystem1} used in the construction of the splitting integrator. Observe that $u_{m,n}^{\LT}(t)=v_{m,n}^{M,N,1}(t)$ for all $t\in[t_m,t_{m+1})$, and, in particular, that $u_{m,n}^{\LT}(t_m)=u_{m,n}^{\LT}$. Moreover, by the construction of the splitting scheme, see~\eqref{eq:scheme}, it holds that $u_{m,n}^{\LT}(t_{m+1})=u_{m+1,n}^{\LT}$.

As a result, for any $n\in\{1,\ldots,N-1\}$, the mapping $u_n^{\LT}\colon t\in [0,T]\mapsto u_n^{\LT}(t)$ defined such that $u_n^{\LT}(t)=u_{m,n}^{\LT}(t)$ for $t\in[t_m,t_{m+1}]$ is well-defined. It is continuous on each interval $[t_m,t_{m+1})$, and one has $u_{n}^{\LT}(t_m)=u_{m,n}^{\LT}$ for all $m\in\{0,\ldots,M\}$.

We claim that the following identity holds: for all $M\in\N$ and $N\in\N$, for all $n\in\{1,\ldots,N-1\}$ and $t\in[0,T]$, one has
\begin{equation}\label{eq:schemeaux2}
u_{n}^{\LT}(t)=\sum_{k=1}^{N-1}G_{nk}^{N}(\ell^M(t))u_{k,0}^{\LT}+\sqrt{N}\int_{0}^{t}\sum_{k=1}^{N-1}G_{nk}^{N}(\ell^M(t)-\ell^M(s))u_k^{\LT}(s)f(u_k^{\LT}(\ell^{M}(s))\,\text dW_k^N(s).
\end{equation}
The proof is based on a straightforward recursion argument.

Recall from Section~\ref{sec:spatial-discretization} that one has the identities $NG_{nk}^{N}(t)=G^{N}(t,x_n,x_k)$ and $\sqrt{N}\text dW_n^N(t)=N\bigl(W(t,x_{n+1})-W(t,x_n)\bigr)$.
We are now in position to provide the definition of the auxiliary process $u^{\LT}$: for $t\in[0,T]$ and $x\in[0,1]$, define
\begin{equation}\label{eq:schemeaux3}
\begin{aligned}
u^{\LT}(t,x)&=\int_0^1 G^N(t,x,y)u_0(\kappa^N(y))\,\text dy\\
&\quad+\int_0^t \int_0^1 G^N(\ell^M(t)-\ell^M(s),x,y)u^{\LT}(s,\kappa^N(y))f(u^{\LT}(\ell^M(s),\kappa^N(y))\,\text dW(s,y).
\end{aligned}
\end{equation}
In the identity~\eqref{eq:schemeaux3} above, it is worth recalling that $x\in[0,1]\mapsto G^N(t,x,y)$ is a piecewise linear mapping, whereas $y\in[0,1]\mapsto G^N(t,x,y)$ is a piecewise constant mapping, with $G^N(t,x_n,x_k)=NG_{nk}^N(t)$ for all $1\le n,k\le N-1$ and $t\in[0,T]$.

Combining~\eqref{eq:schemeaux2} and~\eqref{eq:schemeaux3}, one obtains the identity $u^{\LT}(t,x_n)=u_n^{\LT}(t)$ for all $t\in[0,T]$ and $n\in\{1,\ldots,N-1\}$, and therefore one obtains the required property $u^{\LT}(t_m,x_n)=u_n^{\LT}(t_m)=u_{m,n}^{\LT}$. Note that, for any $t\in[0,T]$, the mapping $x\in[0,1]\mapsto u^{\LT}(t,x)$ is piecewise linear, more precisely it is linear on each subinterval $[x_n,x_{n+1}]$.

\subsection{Auxiliary inequalities}\label{sec:ineqs}

In this subsection we state several inequalities used in the convergence analysis of the splitting scheme.
\begin{itemize}
\item For any continuous function $v\colon[0,1]\to\R$, one has (see for instance \cite[Eq.~(3.5)]{MR1644183})
\begin{equation}\label{eq:aux1}
\underset{N\in\N}\sup~\underset{t\ge 0}\sup~\underset{x\in[0,1]}\sup~\Big|\int_0^1 G^N(t,x,y)v(\kappa^N(y))\,\text dy\Big|\le \underset{x\in[0,1]}\sup~|v(x)|.
\end{equation}
\item For all $T\in(0,\infty)$, there exists $C_T\in(0,\infty)$ such that for all $t\in(0,\infty)$ one has (see for instance \cite[Lemma~2.3]{MR4050540})
\begin{equation}\label{eq:aux2}
\underset{N\in\N}\sup~\underset{x\in[0,1]}\sup~\int_0^1 |G^N(t,x,y)|^2\,\text dy \le \frac{C_T}{\sqrt{t}}.
\end{equation}
\item For all $T\in(0,\infty)$, there exists $C_{T}\in(0,\infty)$ such that for all $t\in(0,T]$ and all $M\in\N$ one has
\begin{equation}\label{eq:aux3}
\underset{N\in\N}\sup~\underset{x\in[0,1]}\sup~\int_0^t\int_0^1\big|G^N(t-s,x,y)-G^N(t-\ell^M(s),x,y)\big|^2\,\text dy\,\text ds \le C_{T}\sqrt{\tau}.
\end{equation}
\end{itemize}
Since we are not aware of a detailed proof of the inequality~\eqref{eq:aux3} in the literature, we provide a proof in Appendix~\ref{sec:app}.
Note that the proof is similar to the proof of \cite[Lemma~2.3]{MR4050540}.

Let us also recall the following discrete Gr\"onwall inequality, see for instance~\cite[Lemma~A.4]{MR3154916}: assume that a sequence $\bigl(a_m\bigr)_{0\le m\le M}$ of nonnegative numbers satisfies the inequality
\[
a_m\le A+C\tau\sum_{k=0}^{m-1}\frac{a_k}{\sqrt{t_m-t_k}},
\]
where we recall that $t_k=k\tau=\frac{kT}{M}$, for some $A,C\in(0,\infty)$. Then, there exists $C_T\in(0,\infty)$, depending only on $C$ and on $T$, such that one has
\begin{equation}\label{eq:gronwall}
\underset{0\le m\le M}\sup~a_m\le C_T A.
\end{equation}

\subsection{Moment bounds}\label{sec:proof-moments}

The objective of this section is to prove Proposition~\ref{propo:moment}. Recall that this requires to impose the condition $\tau\le \gamma h$
where we recall that $\tau=T/M$, $h=1/N$ and where $\gamma\in(0,\infty)$ is an arbitrary parameter.

\begin{proof}[Proof of Proposition~\ref{propo:moment}]
Using the definition~\eqref{eq:schemeaux3} of the auxiliary process $u^{\LT}$, for all $m\in\{1,\ldots,M\}$ and $n\in\{1,\ldots,N-1\}$, one has
\begin{align*}
u_{m,n}^{\LT}&=u^{\LT}(t_m,x_n)\\
&=\int_0^1 G^N(t_m,x_n,y)u_0(\kappa^N(y))\,\text dy\\
&\quad+\int_0^t \int_0^1 G^N(t_m-\ell^M(s),x_n,y)u^{\LT}(s,\kappa^N(y))f(u^{\LT}(\ell^M(s),\kappa^N(y))\,\text dW(s,y).
\end{align*}
Using It\^o's isometry formula, one obtains
\begin{align*}
\E[|u_{m,n}^{\LT}|^2]&=\E[\big|\int_0^1 G^N(t_m,x_n,y)u_0(\kappa^N(y))\,\text dy\big|^2]\\
&\quad+\int_0^t\int_0^1 |G^N(t_m-\ell^M(s),x_n,y)|^2\E[|u^{\LT}(s,\kappa^N(y))|^2|f(u^{\LT}(\ell^M(s),\kappa^N(y))|^2]\,\text dy\,\text ds.
\end{align*}
On the one hand, using the auxiliary inequality~\eqref{eq:aux1} and Assumption~\ref{ass:u0}, one obtains
\[
\E[\big|\int_0^1 G^N(t_m,x_n,y)u_0(\kappa^N(y))\,\text dy\big|^2]\le \|u_0\|_\infty^2.
\]
On the other hand, recall that Assumption~\ref{ass:g} implies that $f$ is bounded by $\Lg$. In addition, for all $k\in\{0,\ldots,m-1\}$ and all $s\in[t_k,t_{k+1})$, one has
\[
\E[|u^{\LT}(s,\kappa^N(y))|^2]=\E[|v_{k,n}^{M,N,1}(s)|^2]
\]
where $n \in \{1, \ldots, N-1 \}$ is such that $\kappa^N(y)=x_n$ and $\bigl(v_{k,n}^{M,N,1}(s)\bigr)_{s\in[t_k,t_{k+1}]}$ is the solution of the auxiliary stochastic subsystem~\eqref{eq:scheme-subsystem1}. Using the expression~\eqref{eq:scheme-subsystem1-solution} for the solution of~\eqref{eq:scheme-subsystem1} and the tower property of conditional expectation, one obtains the upper bound
\[
\E[|v_{k,n}^{M,N,1}(s)|^2]=\E[e^{\frac{Nf(u_{k,n}^{\LT})^2(s-t_k)}{2}}|u_{k,n}^{\LT}|^2]\le e^{\frac{N\tau \Lg^2}{2}}\E[|u_{k,n}^{\LT}|^2]\le e^{\frac{\Lg^2 \gamma}{2}}\E[|u_{k,n}^{\LT}|^2],
\]
using the boundedness of $f$ and the condition $N\tau\le \gamma$.

Using the auxiliary inequality~\eqref{eq:aux2}, gathering the upper bounds above yields the following inequality: for all $m\in\{1,\ldots,M\}$ one has
\[
\underset{1\le n\le N-1}\sup~\E[|u_{m,n}^{\LT}|^2]\le \|u_0\|_\infty^2+C_{\gamma,T}\tau \sum_{k=0}^{m-1}\frac{1}{\sqrt{t_m-t_k}}\underset{1\le n\le N-1}\sup~\E[|u_{k,n}^{\LT}|^2].
\]
Using the discrete Grönwall inequality~\eqref{eq:gronwall} then gives
\begin{equation}\label{eq:proofmoment1}
\underset{0\le m\le M}\sup~\underset{1\le n\le N-1}\sup~\E[|u_{m,n}^{\LT}|^2]\le C_{\gamma,T}\|u_0\|_\infty^2,
\end{equation}
where $C_{\gamma,T}\in(0,\infty)$ is independent of $M$, $N$ and $\|u_0\|_\infty^2$. This shows moment bounds of the numerical solution at the grid.
It remains to extend this moment bound for $u^{\LT}(t,x_n)$ when $t$ is no longer assumed to be a grid point $t_m$.

For all $t\in[0,T)$ and $n\in\{0,\ldots,N-1\}$, let $m \in \{0, \ldots, M-1 \}$ be such that $t_m=\ell^M(t)$, using the same arguments as above one has
\[
\E[|u^{\LT}(t,x_n)|^2]=\E[|v_{m,n}^{M,N,1}(t)|^2]\le e^{\frac{\Lg^2 \gamma}{2}}\E[|u_{k,n}^{\LT}|^2]\le C_{\gamma,T}\|u_0\|_\infty^2,
\]
where the inequality~\eqref{eq:proofmoment1} is used in the last step. As a consequence, one has
\begin{equation}\label{eq:proofmoment2}
\underset{t\in[0,T]}\sup~\underset{1\le n\le N-1}\sup~\E[|u^{\LT}(t,x_n)|^2]\le C_{\gamma,T}\|u_0\|_\infty^2.
\end{equation}
Finally, since $x\mapsto u^{\LT}(t,x)$ is linear on each subinterval $[x_n,x_{n+1}]$, one obtains
\begin{equation}\label{eq:proofmoment3}
\underset{t\in[0,T]}\sup~\underset{x\in[0,1]}\sup~\E[|u^{\LT}(t,x)|^2]\le \underset{t\in[0,T]}\sup~\underset{1\le n\le N-1}\sup~\E[|u^{\LT}(t,x_n)|^2]\le C_{\gamma,T}\|u_0\|_\infty^2.
\end{equation}
The proof of Proposition~\ref{propo:moment} is thus completed.
\end{proof}

A straightforward consequence of Proposition~\ref{propo:moment} is the following result.
\begin{lemma}\label{lem:regul-uLT}
Let Assumption~\ref{ass:u0} and Assumption~\ref{ass:g} be satisfied. Let $\left(u^{\LT}(t,x)\right)_{t\in[0,T], x\in[0,1]}$ be given by the mild formula~\eqref{eq:schemeaux3}.

For all $\gamma\in(0,\infty)$ and all $T\in(0,\infty)$, there exists $C_{\gamma,T}\in(0,\infty)$ such that for all $\tau=T/M$ and $h=1/N$ satisfying the condition $\tau \le \gamma h$, for all $m\in\{0,\ldots,M-1\}$ and all $t\in[t_m,t_{m+1})$, one has
\begin{equation}\label{eq:regul-uLT}
\underset{1\le n\le N-1}\sup~\bigl(\E[|u^{\LT}(t,x_n)-u^{\LT}(t_m,x_n)|^2]\bigr)^{\frac12}\le C_{\gamma,T}(1+\|u_0\|_\infty)\left(\frac\tau h\right)^{\frac12}.
\end{equation}
\end{lemma}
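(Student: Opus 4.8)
The plan is to estimate the increment $u^{\LT}(t,x_n)-u^{\LT}(t_m,x_n)$ directly from the explicit formula for the solution of the first subsystem. Since $t\in[t_m,t_{m+1})$ implies $\ell^M(t)=t_m$, the identity~\eqref{eq:schemeaux1} reduces to $u^{\LT}(t,x_n)=u_n^{\LT}(t)=v_{m,n}^{M,N,1}(t)$, and the value at the left endpoint is simply $u^{\LT}(t_m,x_n)=u_{m,n}^{\LT}$. Thus the difference to be controlled is
\[
u^{\LT}(t,x_n)-u^{\LT}(t_m,x_n)=\left(e^{\sqrt{N}f(u_{m,n}^{\LT})(W_n^N(t)-W_n^N(t_m))-\frac{Nf(u_{m,n}^{\LT})^2(t-t_m)}{2}}-1\right)u_{m,n}^{\LT}.
\]
First I would factor out $u_{m,n}^{\LT}$ and reduce the problem to bounding the second moment of $(\mathcal{E}-1)$ times $u_{m,n}^{\LT}$, where $\mathcal{E}$ denotes the stochastic exponential. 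The key observation is that $\mathcal{E}$, conditionally on $\mathcal{F}_{t_m}$, is a geometric Brownian motion evaluated at time $t-t_m\le\tau$, and $u_{m,n}^{\LT}$ is $\mathcal{F}_{t_m}$-measurable.

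The main step is to compute or bound $\E[|\mathcal{E}-1|^2\mid\mathcal{F}_{t_m}]$. Writing $a=f(u_{m,n}^{\LT})$ (which is $\mathcal{F}_{t_m}$-measurable and bounded by $\Lg$) and using that $W_n^N(t)-W_n^N(t_m)$ is a centered Gaussian of variance $t-t_m$ independent of $\mathcal{F}_{t_m}$, the conditional expectation of $\mathcal{E}$ equals $1$ (the exponential is a martingale) and the conditional expectation of $\mathcal{E}^2$ equals $e^{Na^2(t-t_m)}$. Hence
\[
\E[|\mathcal{E}-1|^2\mid\mathcal{F}_{t_m}]=e^{Na^2(t-t_m)}-1.
\]
Under the CFL condition $\tau\le\gamma h$, that is $N\tau\le\gamma$, together with $|a|\le\Lg$ and $t-t_m\le\tau$, one has $Na^2(t-t_m)\le \Lg^2\gamma$, so by the elementary inequality $e^x-1\le x e^x$ on $[0,\Lg^2\gamma]$ one obtains $e^{Na^2(t-t_m)}-1\le C_{\gamma}N(t-t_m)\le C_{\gamma}N\tau=C_{\gamma}\tau/h$, where $C_\gamma$ absorbs $\Lg^2 e^{\Lg^2\gamma}$.

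To finish, I would take expectations: using the tower property and that $u_{m,n}^{\LT}$ is $\mathcal{F}_{t_m}$-measurable,
\[
\E[|u^{\LT}(t,x_n)-u^{\LT}(t_m,x_n)|^2]=\E\bigl[|u_{m,n}^{\LT}|^2\,\E[|\mathcal{E}-1|^2\mid\mathcal{F}_{t_m}]\bigr]\le C_{\gamma}\frac{\tau}{h}\,\E[|u_{m,n}^{\LT}|^2].
\]
Invoking the moment bound~\eqref{eq:momentscheme} from Proposition~\ref{propo:moment}, which holds precisely under the same condition $\tau\le\gamma h$ and gives $\sup_n\E[|u_{m,n}^{\LT}|^2]\le C_{\gamma,T}(1+\|u_0\|_\infty^2)$, and taking the supremum over $n$ and a square root yields the claimed bound $C_{\gamma,T}(1+\|u_0\|_\infty)(\tau/h)^{1/2}$. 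The only mild subtlety I anticipate is keeping the constants clean when bounding $e^x-1$ uniformly over the admissible range $x\in[0,\Lg^2\gamma]$, and ensuring the moment bound is applied at the grid index $m$ (where it is valid) rather than at the intermediate time $t$; both are routine once the conditioning argument is set up, so I expect no serious obstacle here beyond careful bookkeeping.
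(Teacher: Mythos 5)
Your proof is correct and follows essentially the same route as the paper: both exploit that on $[t_m,t_{m+1})$ the process $u^{\LT}(\cdot,x_n)$ coincides with the frozen-coefficient geometric Brownian motion $v_{m,n}^{M,N,1}$, and both conclude via the CFL condition $N\tau\le\gamma$ together with the moment bound of Proposition~\ref{propo:moment}. The only (harmless) difference is that you compute the conditional second moment of the stochastic exponential explicitly, whereas the paper applies It\^o's isometry to the integral form $\sqrt{N}\int_{t_m}^{t}u^{\LT}(s,x_n)f(u_{m,n}^{\LT})\,\text dW_n^N(s)$ and then invokes the moment bound at intermediate times.
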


\begin{proof}[Proof of Lemma~\ref{lem:regul-uLT}]
Let $n\in\{1,\ldots,N-1\}$ and $m\in\{0,\ldots,M-1\}$, then for all $t\in[t_m,t_{m+1})$ one has
\begin{align*}
u^{\LT}(t,x_n)-u^{\LT}(t_m,x_n)&=v_{m,n}^{M,N,1}(t)-v_{m,n}^{M,N,1}(t_m)\\
&=\sqrt{N}\int_{t_m}^{t}v_{m,n}^{M,N,1}(s)f(u_{m,n}^{\LT})\,\text dW_n^N(s)\\
&=\sqrt{N}\int_{t_m}^{t}u^{\LT}(s,x_n)f(u_{m,n}^{\LT})\,\text dW_n^N(s),
\end{align*}
where we recall that the auxiliary process $\bigl(v_{m,n}^{M,N,1}(t)\bigr)_{t_m\le t\le t_{m+1}}$ is defined by the auxiliary subsystem~\eqref{eq:scheme-subsystem1} which gives the first step of the splitting procedure, see Section~\ref{sec:schemeconstruction}.

Since the mapping $f$ is bounded, using It\^o's isometry formula, the condition $\tau N\le \gamma$ and the moment bounds~\eqref{eq:momentscheme} from Proposition~\ref{propo:moment}, one obtains
\[
\E[|u^{\LT}(t,x_n)-u^{\LT}(t_m,x_n)|^2]\le \Lg^2 N\tau \E[|u_{m,n}^{\LT}|^2]\le \Lg^{2} C_{\gamma,T}\bigl(1+\|u_0\|_\infty^2\bigr)\tau h^{-1}.
\]
The proof of Lemma~\ref{lem:regul-uLT} is thus completed.
\end{proof}

\subsection{Convergence analysis}\label{sec:proof-convergence}

This section is devoted to the proof of the mean-square convergence of the splitting scheme given in Theorem~\ref{theo:main}.

\begin{proof}[Proof of Theorem~\ref{theo:main}]
Recall that $u_{m,n}^{\LT}=u^{\LT}(t_m,x_n)$ for all $n\in\{1,\ldots,N-1\}$ and $m\in\{0,\ldots,M\}$, where $\bigl(u^{\LT}(t,x)\bigr)_{t\in[0,1],x\in[0,1]}$ is the process defined by~\eqref{eq:schemeaux1}.

For all $n\in\{1,\ldots,N-1\}$ and $m\in\{1,\ldots,M\}$, let us define
\[
E_{m,n}=u^N(t_m,x_n)-u_{m,n}^{\LT}\quad\text{and}\qquad E_m=\underset{1\le n\le N-1}\sup~\E[|E_{m,n}|^2].
\]

Using the expression~\eqref{eq:uN} for $u^N(t,x)$ and the expression~\eqref{eq:schemeaux3} for $u^{\LT}(t,x)$, one obtains the following decomposition of the error: for all $n\in\{1,\ldots,N-1\}$ and $m\in\{1,\ldots,M\}$, one has
\begin{align*}
E_{m,n}&=u^N(t_m,x_n)-u^{\LT}(t_m,x_n)\\
&=\int_0^{t_m}\int_0^1 G^N(t_m-s,x,y)g(u^N(s,\kappa^N(y)))\,\text dW(s,y)\\
&-\int_0^{t_m} \int_0^1 G^N(t_m-\ell^M(s),x,y)u^{\LT}(s,\kappa^N(y))f(u^{\LT}(\ell^M(s),\kappa^N(y))\,\text dW(s,y)\\
&=E_{m,n}^{(1)}+E_{m,n}^{(2)},
\end{align*}
where we set
\begin{align*}
E_{m,n}^{(1)}&=\int_0^{t_m}\int_0^1 G^N(t_m-s,x,y)\bigl[g(u^N(s,\kappa^N(y)))-u^{\LT}(s,\kappa^N(y))f(u^{\LT}(\ell^M(s),\kappa^N(y))\bigr]\,\text dW(s,y),\\
E_{m,n}^{(2)}&=\int_0^{t_m} \int_0^1 \bigl[G^{N}(t_{m}-s,x,y)-G^N(t_m-\ell^M(s),x,y)\bigr]u^{\LT}(s,\kappa^N(y))f(u^{\LT}(\ell^M(s),\kappa^N(y))\,\text dW(s,y).
\end{align*}

Let us first deal with the error term $E_{m,n}^{(1)}$. Recall that $g(u)=uf(u)$, therefore one has the decomposition $E_{m,n}^{(1)}=E_{m,n}^{(1,1)}+E_{m,n}^{(1,2)}+E_{m,n}^{(1,3)}$, where
\begin{align*}
E_{m,n}^{(1,1)}&=\int_0^{t_m}\int_0^1 G^N(t_m-s,x,y)\bigl[g(u^N(s,\kappa^N(y)))-g(u^N(\ell^{M}(s),\kappa^N(y)))\bigr]\, \text dW(s,y)\\
E_{m,n}^{(1,2)}&=\int_0^{t_m}\int_0^1 G^N(t_m-s,x,y)\bigl[g(u^N(\ell^{M}(s),\kappa^N(y)))-g(u^{\LT}(\ell^{M}(s),\kappa^N(y)))\bigr]\,\text dW(s,y)\\
E_{m,n}^{(1,3)}&=\int_0^{t_m}\int_0^1 G^N(t_m-s,x,y)\bigl[u^{\LT}(\ell^{M}(s),\kappa^N(y))-u^{\LT}(s,\kappa^N(y))\bigr]f(u^{\LT}(\ell^{M}(s),\kappa^N(y)))\,\text dW(s,y).
\end{align*}
Using It\^o's isometry formula, the global Lipschitz continuity assumption on $g$, one obtains
\begin{align*}
\E[|E_{m,n}^{(1,1)}|^2]&\le \Lg^2\int_0^{t_m}\int_0^1 G^N(t_m-s,x,y)^2\E[|u^N(s,\kappa^N(y))-u^N(\ell^{M}(s),\kappa^N(y))|^2]\,\text dy\,\text ds\\
&\le C_T(u_0)\Lg^2\sqrt{\tau} \int_0^{t_m}\int_0^1 G^N(t_m-s,x,y)^2 \,\text dy\,\text ds\\
&\le C_T(u_0)\sqrt{\tau},
\end{align*}
where we have used the temporal regularity estimate~\eqref{eq:regul-uN} for $u^N$ and the auxiliary inequality~\eqref{eq:aux2}.

Similarly, using It\^o's isometry formula, the global Lipschitz continuity assumption on $g$, one obtains
\begin{align*}
\E[|E_{m,n}^{(1,2)}|^2]&\le \Lg^2\int_0^{t_m}\int_0^1 G^N(t_m-s,x,y)^2\E[|u^N(\ell^{M}(s),\kappa^N(y))-u^{\LT}(\ell^{M}(s),\kappa^N(y))|^2]\,\text dy\,\text ds\\
&\le C\sum_{k=0}^{m-1}E_{k}\int_{t_k}^{t_{k+1}}\int_0^1 G^N(t_m-s,x,y)^2\,\text dy\,\text ds.
\end{align*}
Using the inequality~\eqref{eq:aux2}, for all $k\in\{0,\ldots,m-1\}$, one has
\begin{align*}
\int_{t_k}^{t_{k+1}}\int_0^1 G^N(t_m-s,x,y)^2\,\text dy\,\text ds&\le \int_{t_k}^{t_{k+1}}\frac{C_T}{\sqrt{t_m-s}}\,\text ds\\
&= 2C_T\bigl(\sqrt{t_m-t_k}-\sqrt{t_m-t_{k+1}}\bigr)\\
&= 2C_T\sqrt{t_m-t_k}\Bigl(1-\sqrt{1-\frac{\tau}{t_m-t_k}}\Bigr)\\
&\le \frac{2C_T\tau}{\sqrt{t_m-t_{k}}},
\end{align*}
where we have used the inequality $1-\sqrt{1-z}\le z$ for all $z\in[0,1]$ in the last step. Therefore one has
\[
\E[|E_{m,n}^{(1,2)}|^2]\le C_T\tau\sum_{k=0}^{m-1}\frac{E_{k}}{\sqrt{t_m-t_k}}.
\]
Finally, for the third term, using It\^o's isometry formula and the boundedness of $f$, one obtains
\begin{align*}
\E[|E_{m,n}^{(1,3)}|^2]&\le L_{g}^{2} \int_0^{t_m}\int_0^1 G^N(t_m-s,x,y)^2\E[|u^{\LT}(\ell^{M}(s),\kappa^N(y))-u^{\LT}(s,\kappa^N(y))|^2]\,\text dy\,\text ds\\
&\le C_{\gamma,T}(u_0)\tau h^{-1}\int_0^{t_m}\int_0^1 G^N(t_m-s,x,y)^2\,
\text dy\,\text ds\\
&\le C_{\gamma,T}(u_0)\tau h^{-1}
\end{align*}
using the temporal regularity estimate~\eqref{eq:regul-uLT} from Lemma~\ref{lem:regul-uLT} for $u^{\LT}$ and the auxiliary inequality~\eqref{eq:aux2}.

Let us now deal with the error term $E_{m,n}^{(2)}$. Using It\^o's formula, the boundedness of $f$ and the moment bounds~\eqref{eq:momentscheme} from Proposition~\ref{propo:moment}, one obtains
\begin{align*}
\E[|E_{m,n}^{(2)}|^2]&\le \Lg^2\int_0^{t_m} \int_0^1 \big|G^{N}(t-\ell^M(s),x,y)-G^N(t_m-\ell^M(s),x,y)\big|^2\E[|u^{\LT}(s,\kappa^N(y))|^2]\,\text dy\,\text ds\\
&\le C_{\gamma,T}(u_0)\int_0^{t_m} \int_0^1 \big|G^{N}(t-\ell^M(s),x,y)-G^N(t_m-\ell^M(s),x,y)\big|^2\,\text dy\,\text ds\\
&\le C_{\gamma,T}(u_0)\sqrt{\tau},
\end{align*}
owing to the auxiliary inequality~\eqref{eq:aux3} in the last step.

Gathering the estimates, for all $m\in\{1,\ldots,M\}$, one has
\[
E_m\le C_{\gamma,T}(u_0)\left(\sqrt{\tau}+\tau h^{-1}\right)+C_T\tau\sum_{k=0}^{m-1}\frac{E_{k}}{\sqrt{t_m-t_k}}.
\]
Applying the discrete Gr\"onwall inequality~\eqref{eq:gronwall} (see Section~\ref{sec:ineqs}) then yields
\[
\underset{0\le m\le M}\sup~E_m\le C_{\gamma,T}(u_0)\left(\sqrt{\tau}+\frac{\tau}{h}\right).
\]
This gives the error estimate~\eqref{eq:error}. When the condition $\tau\le \gamma h^2$ is satisfied, one has $\tau h^{-1}\le \sqrt{\gamma}\tau^{\frac12}$ and one has the error estimate~\eqref{eq:error2}. This concludes the proof of Theorem~\ref{theo:main}.
\end{proof}

Let us also provide the proof of Corollary~\ref{cor:errorfull}.
\begin{proof}[Proof of Corollary~\ref{cor:errorfull}]
It suffices to combine the error estimate~\eqref{eq:errorspatial} from Proposition~\ref{propo:error-spatial} for the spatial discretization error, and the error estimate~\eqref{eq:error} from Theorem~\ref{theo:main} for the temporal discretization error. One then obtains the error estimate for the splitting scheme
\begin{align*}
\bigl(\E[|u_{m,n}^{\LT}-u(t_m,x_n)|^2]\bigr)^{\frac12}&\le \bigl(\E[|u_{m,n}^{\LT}-u^N(t_m,x_n)|^2]\bigr)^{\frac12}+\bigl(\E[|u^N(t_m,x_n)-u(t_m,x_n)|^2]\bigr)^{\frac12}\\
&\le C_{\gamma,T}(u_0)\tau^{\frac14}+C_{T}(u_0)h^{\frac12}\\
&\le C_{\gamma,T}(u_0)\gamma^{\frac14}h^{\frac12}+C_{T}(u_0)h^{\frac12},
\end{align*}
under the condition $\tau\le \gamma h^2$. This gives the error estimate~\eqref{eq:errorfull} and concludes the proof of Corollary~\ref{cor:errorfull}.
\end{proof}

\subsection{Proof of Proposition~\ref{propo:positivity-exact}}\label{sec:proof-positivity-exact}
We conclude this section with the proof of the positivity property of the exact solution to the stochastic heat equation~\eqref{eq:spde} on a bounded domain.

\begin{proof}[Proof of Proposition~\ref{propo:positivity-exact}]
Owing to Corollary~\ref{cor:errorfull} and to the temporal regularity estimate~\eqref{eq:regulexact} satisfied by the solution $u$ of the SPDE in equation~\eqref{eq:spde}, one obtains the following result (recall that $\tau=T/M$ and $h=1/N$):
there exists $C_{\gamma,T}(u_0)\in(0,\infty)$ such that for all $N\in\N$ and $M\in\N$, such that $M\ge \frac{T N^2}{\gamma}$, for all $t\in[0,T]$ and $x\in[0,1]$, one has
\begin{equation}
\bigl(\E[|u(t,x)-u^{\LT}(\ell^{M}(t),\kappa^N(x))|^2]\bigr)^{\frac12}\le C_{\gamma,T}(u_0)N^{-\frac12}.
\end{equation}
Let $t\in[0,T]$ and $x\in[0,1]$ be fixed, then there exists a sequence $\bigl(N_k)_{k\in\N}$ such that $N_{k}\to\infty$ and $u^{\LT}(\ell^{M_k}(t),\kappa^{N_k}(x))$ converges to $u(t,x)$ almost surely. Since $u^{\LT}(\ell^{M_k}(t),\kappa^{N_k}(x))\ge 0$ almost surely owing to Proposition~\ref{propo:positivity-scheme}, one obtains $u(t,x)\ge 0$ almost surely.
\end{proof}

\section{Generalization to systems}\label{sec:syst}

In this section, we briefly describe how to generalize the construction of the splitting scheme~\eqref{eq:scheme} and the analysis above to stochastic systems of the type
\begin{equation}\label{eq:spde-sys}
\left\lbrace
\begin{aligned}
&\text d u_1(t,x) = \partial_{xx}^2 u_1(t,x)\,\text dt + g_1(u_1(t,x),u_2(t,x))\,\text d W_1(t,x),\\
&\text d u_2(t,x) = \partial_{xx}^2 u_2(t,x)\,\text dt + g_2(u_1(t,x),u_2(t,x))\,\text d W_2(t,x),\\
&u_1(t,0)=u_1(t,1)=0,\quad u_2(t,0)=u_2(t,1)=0,\\
&u_1(0,x) = u_{1,0}(x),\quad u_2(0,x) = u_{2,0}(x),
\end{aligned}
\right.
\end{equation}
for $(t,x) \in [0,T] \times [0,1]$, where $g_1,g_2\colon\R^2\to\R$ are globally Lipschitz continuous mappings, with initial values $u_{1,0},u_{2,0}$ satisfying Assumptions~\ref{ass:u0} and~\ref{ass:pos}. The two evolution equations are driven by space-time white noise. The Wiener sheets $W_1$ and $W_2$ can either be equal or independent. For ease of presentation we only deal with systems of two equations, while considering systems of arbitrary size would also be possible.

In this setting, to obtain solutions which only have nonnegative values, it is necessary to replace Assumption~\ref{ass:g} by the following.

\begin{assumption}\label{ass:gsystem}
The mappings $g_1,g_2\colon\R^2\to\R$ are of class $\mathcal{C}^1$ and globally Lipschitz continuous. In addition, they satisfy $g_1(0,v_2)=0$ and $g_2(v_1,0)=0$
for all $(v_1,v_2)\in\R^2$.
\end{assumption}

One then has the following generalization of Proposition~\ref{propo:positivity-exact}.
\begin{proposition}\label{propo:positivity-exact-sys}
Consider the SPDE system~\eqref{eq:spde-sys}. Let Assumption~\ref{ass:gsystem} be satisfied and assume that the initial values $u_{1,0},u_{2,0}$ satisfy Assumptions~\ref{ass:u0} and~\ref{ass:pos}. Then, for all $t\in(0,\infty)$ and all $x\in[0,1]$, almost surely, one has
\[
u_1(t,x)\ge 0~,\quad u_2(t,x)\ge 0.
\]
\end{proposition}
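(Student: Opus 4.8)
The plan is to mirror the three-step strategy that yields Proposition~\ref{propo:positivity-exact} in the scalar case: construct a positivity-preserving splitting scheme for the system~\eqref{eq:spde-sys}, prove that it preserves positivity, establish its mean-square convergence to the exact solution, and finally extract an almost surely convergent subsequence in order to transfer nonnegativity from the scheme to the limit.

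First I would construct the splitting scheme. Using Assumption~\ref{ass:gsystem}, the structural conditions $g_1(0,v_2)=0$ and $g_2(v_1,0)=0$ allow one to write, for all $(v_1,v_2)\in\R^2$,
$$
g_1(v_1,v_2)=v_1 f_1(v_1,v_2), \qquad g_2(v_1,v_2)=v_2 f_2(v_1,v_2),
$$
where $f_1(v_1,v_2)=\int_0^1 \partial_{v_1}g_1(rv_1,v_2)\,\text dr$ and $f_2(v_1,v_2)=\int_0^1 \partial_{v_2}g_2(v_1,rv_2)\,\text dr$ are continuous and bounded, respectively, by the Lipschitz constants of $g_1$ and $g_2$. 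Freezing $f_1$ and $f_2$ at the current values $(u_{1,m,n}^{\LT},u_{2,m,n}^{\LT})$, the noise subsystem for each component becomes a geometric Brownian motion, exactly as in~\eqref{eq:scheme-subsystem1-solution}, while the diffusion subsystem is solved by the nonnegative discrete heat kernel $e^{\tau N^2D^N}$. The positivity-preserving property then follows by the recursion used in the proof of Proposition~\ref{propo:positivity-scheme}, carried out componentwise: the geometric Brownian motion step preserves the sign of each $u_{i,m,n}^{\LT}$, and multiplication by $G_{nk}^N(\tau)\ge 0$ preserves nonnegativity.

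Next I would establish mean-square convergence. The moment bounds of Proposition~\ref{propo:moment} extend to the system since $f_1$ and $f_2$ remain bounded, so the geometric Brownian motion estimate and the discrete Gr\"onwall argument apply verbatim to each component. For the error analysis one sets $E_{m,n}^{(i)}=u_i^N(t_m,x_n)-u_{i,m,n}^{\LT}$ for $i\in\{1,2\}$ and follows the decomposition in the proof of Theorem~\ref{theo:main}. The essential new feature, and the main obstacle, is the coupling: the global Lipschitz continuity of $g_1$ in both arguments makes the analogue of the term $E_{m,n}^{(1,2)}$ for the first component depend on the errors of \emph{both} components, and symmetrically for $g_2$. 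The remedy is to run the Gr\"onwall argument on the combined quantity $E_m=\sup_{1\le n\le N-1}\bigl(\E[|E_{m,n}^{(1)}|^2]+\E[|E_{m,n}^{(2)}|^2]\bigr)$, so that the coupled Lipschitz estimates close into a single inequality of the form required by~\eqref{eq:gronwall}. The remaining terms are treated exactly as before, using the auxiliary inequalities~\eqref{eq:aux1}--\eqref{eq:aux3} together with the regularity estimates~\eqref{eq:regul-uN} and~\eqref{eq:regul-uLT}, yielding the rate $\tau^{1/4}$ under the CFL condition and, combined with the spatial error, convergence of the fully discrete scheme.

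Finally, with convergence of the fully discrete scheme in hand, I would conclude exactly as in the proof of Proposition~\ref{propo:positivity-exact}: fixing $(t,x)$, I extract a subsequence $N_k\to\infty$ along which $u_i^{\LT}(\ell^{M_k}(t),\kappa^{N_k}(x))$ converges to $u_i(t,x)$ almost surely for $i\in\{1,2\}$, and since each approximant is nonnegative by the positivity-preserving property, I obtain $u_1(t,x)\ge 0$ and $u_2(t,x)\ge 0$ almost surely.
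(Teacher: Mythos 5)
Your proposal is correct and follows essentially the same route as the paper: the paper proves Proposition~\ref{propo:positivity-exact-sys} by combining the positivity of the system splitting scheme (Proposition~\ref{propo:positivity-scheme-sys}), the convergence results (Theorem~\ref{theo:main-sys} and Corollary~\ref{cor:errorfull-sys}, whose proofs are adaptations of the scalar case), and the extraction of an almost surely convergent subsequence exactly as in the proof of Proposition~\ref{propo:positivity-exact}. Your observation that the coupled Lipschitz estimates force the Gr\"onwall argument to be run on the sum of the two components' mean-square errors is precisely the adaptation the paper leaves implicit.
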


As in Sections~\ref{sec:setting} and~\ref{sec:scheme}, the mesh size and the time-step sizes are denoted by $h=1/N$ and $\tau=T/M$ respectively, and the space and time grid points are denoted by $x_n=nh$ and $t_m=m\tau$, with $0\le n\le N$ and $0\le m\le M$. In addition, introduce the mappings $f_1,f_2\colon\R^2\to\R$ defined by
\begin{align*}
f_1(v_1,v_2)&=\frac{g_1(v_1,v_2)}{v_1}=\int_{0}^{1}\partial_{v_1}g_1(rv_1,v_2)\,\text dr~,\quad
f_2(v_1,v_2)&=\frac{g_2(v_1,v_2)}{v_2}=\int_{0}^{1}\partial_{v_2}g_2(v_1,rv_2)\,\text dr.
\end{align*}
Owing to Assumption~\ref{ass:gsystem}, the mappings $f_1$ and $f_2$ are bounded and continuous mappings.
Finally, for all $t\ge 0$ and $n\in\{1,\ldots,N-1\}$ define
\[
W_{1,n}^{N}(t)=\sqrt{N}\bigl(W_1(t,x_{n+1})-W_1(t,x_n)\bigr)~,\quad W_{2,n}^{N}(t)=\sqrt{N}\bigl(W_2(t,x_{n+1})-W_2(t,x_n)\bigr)
\]
and define the noise increments
\[
\Delta_{m,n}W_1=W_{1,n}^{N}(t_{m+1})-W_{1,n}^{N}(t_m)~,\quad \Delta_{m,n}W_{2}=W_{2,n}^{N}(t_{m+1})-W_{2,n}^{N}(t_m)
\]
for all $n\in\{1,\ldots,N-1\}$ and $m\in\{0,\ldots,M-1\}$.

Using the finite difference method and the same notation as in Section~\ref{sec:spatial-discretization}, one obtains the spatial semi-discretization scheme for the SPDE system~\eqref{eq:spde-sys} with mesh size $h$ as follows:
\begin{equation}\label{eq:spatialscheme-sys}
\left\lbrace
\begin{aligned}
\text du_1^{N}(t)&=N^2D^Nu_1^{N}(t)\,\text dt+\sqrt{N}g_1(u_1^{N}(t),u_2^{N}(t))\,\text dW_1^{N}(t)\\
\text du_2^{N}(t)&=N^2D^Nu_2^{N}(t)\,\text dt+\sqrt{N}g_2(u_1^{N}(t),u_2^{N}(t))\,\text dW_2^{N}(t).
\end{aligned}
\right.
\end{equation}

We are now in position to state the definition of the fully-discrete scheme based on a Lie--Trotter splitting strategy and inspired by~\eqref{eq:scheme} for the approximation of solutions of~\eqref{eq:spde-sys}: for all $m\in\{0,\ldots,M-1\}$, set
\begin{equation}\label{eq:scheme-sys}
\left\lbrace
\begin{aligned}
u_{1,m+1}^{\LT}&=e^{\tau N^2D^N}\left(\exp\Bigl(\sqrt{N}f_1(u_{1,m,n}^{\LT},u_{2,m,n}^{\LT})\Delta_{m,n}W_1-\frac{Nf_1(u_{1,m,n}^{\LT},u_{2,m,n}^{\LT})^2\tau}{2}\Bigr)u_{1,m,n}^{\LT}\right)_{1\le n\le N-1}\\
u_{2,m+1}^{\LT}&=e^{\tau N^2D^N}\left(\exp\Bigl(\sqrt{N}f_2(u_{1,m,n}^{\LT},u_{2,m,n}^{\LT})\Delta_{m,n}W_2-\frac{Nf_2(u_{1,m,n}^{\LT},u_{2,m,n}^{\LT})^2\tau}{2}\Bigr)u_{2,m,n}^{\LT}\right)_{1\le n\le N-1},
\end{aligned}
\right.
\end{equation}
with initial values $u_{1,0}^{\LT}=\bigl(u_{1,0}(x_n)\bigr)_{1\le n\le N-1}$ and $u_{1,0}^{\LT}=\bigl(u_{2,0}(x_n)\bigr)_{1\le n\le N-1}$.

The scheme~\eqref{eq:scheme-sys} is positivity-preserving in the following sense.
\begin{proposition}\label{propo:positivity-scheme-sys}
Let $M\in\N$ and $N\in\N$ be arbitrary integers and let $T\in(0,\infty)$.
Let Assumption~\ref{ass:gsystem} be satisfied, and assume that the initial values $u_{1,0},u_{2,0}$ satisfy Assumptions~\ref{ass:u0} and~\ref{ass:pos}.
Let the sequence $u_{1,0}^{\LT},\ldots,u_{1,M}^{\LT}$ and $u_{2,0}^{\LT},\ldots,u_{2,M}^{\LT}$ be given by the splitting scheme~\eqref{eq:scheme-sys}, with $h=1/N$ and $\tau=T/M$, with initial values $u_{1,0,n}^{\LT}=u_{1,0}(x_n)\ge 0$ and $u_{1,0,n}^{\LT}=u_{2,0}(x_n)\ge 0$ for all $n\in\{1,\ldots,N\}$. Then, almost surely, one has
\[
u_{1,m,n}^{\LT}\ge 0~,\quad u_{2,m,n}^{\LT}\ge 0,
\]
for all $m\in\{1,\ldots,M\}$ and $n\in\{1,\ldots,N-1\}$.
\end{proposition}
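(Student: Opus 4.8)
The plan is to mirror the proof of Proposition~\ref{propo:positivity-scheme} from the scalar case, proceeding by recursion on the time index $m$ and treating the two components $u_1^{\LT}$ and $u_2^{\LT}$ separately at each step. First I would record the base case: by hypothesis $u_{1,0,n}^{\LT}=u_{1,0}(x_n)\ge 0$ and $u_{2,0,n}^{\LT}=u_{2,0}(x_n)\ge 0$ for all $n$, since the initial values satisfy Assumption~\ref{ass:pos}. The recursion then propagates nonnegativity through each time step of the scheme~\eqref{eq:scheme-sys}.

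For the inductive step, assume $u_{1,m,n}^{\LT}\ge 0$ and $u_{2,m,n}^{\LT}\ge 0$ for all $n\in\{1,\ldots,N-1\}$. As in the scalar case, the scheme decomposes into a noise substep followed by a heat substep for each component. In the noise substep, the coefficients $f_1(u_{1,m,n}^{\LT},u_{2,m,n}^{\LT})$ and $f_2(u_{1,m,n}^{\LT},u_{2,m,n}^{\LT})$ are frozen (deterministic given $\mathcal{F}_{t_m}$) real numbers, well-defined and bounded by $\Lg$ owing to Assumption~\ref{ass:gsystem} and the integral representations of $f_1,f_2$, so that each component evolves as a geometric Brownian motion whose exponential factor is almost surely strictly positive; hence
\[
\exp\Bigl(\sqrt{N}f_1(u_{1,m,n}^{\LT},u_{2,m,n}^{\LT})\Delta_{m,n}W_1-\tfrac{N f_1(u_{1,m,n}^{\LT},u_{2,m,n}^{\LT})^2\tau}{2}\Bigr)u_{1,m,n}^{\LT}\ge 0,
\]
and symmetrically for the second component. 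In the heat substep, multiplication by the nonnegative entries $G_{nk}^N(\tau)\ge 0$ of the discrete heat kernel (see Section~\ref{sec:spatial-discretization}) preserves nonnegativity, which yields $u_{1,m+1,n}^{\LT}\ge 0$ and $u_{2,m+1,n}^{\LT}\ge 0$ and closes the induction.

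The argument is essentially routine, and the only point requiring care is structural rather than computational: the coupling between the two equations enters the scheme \emph{solely} through the frozen scalar multipliers $f_1(u_{1,m,n}^{\LT},u_{2,m,n}^{\LT})$ and $f_2(u_{1,m,n}^{\LT},u_{2,m,n}^{\LT})$, which affect the magnitude but never the sign of each update. This is precisely what Assumption~\ref{ass:gsystem} secures: the conditions $g_1(0,v_2)=0$ and $g_2(v_1,0)=0$ guarantee that $f_1=g_1/v_1$ and $f_2=g_2/v_2$ extend to bounded continuous functions, so that each subsystem genuinely is a geometric Brownian motion of the type~\eqref{eq:scheme-subsystem1} for which positivity is preserved componentwise. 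I do not anticipate a substantive obstacle beyond verifying this decoupling of signs; no moment bound or convergence estimate is needed for the positivity statement itself.
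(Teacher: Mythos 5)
Your proof is correct and follows exactly the route the paper intends: the paper does not write out a separate argument but states that the proof is a straightforward modification of that of Proposition~\ref{propo:positivity-scheme}, namely the same recursion in $m$ with a positive geometric-Brownian-motion factor in the noise substep and the nonnegativity of $G_{nk}^N(\tau)$ in the heat substep, applied to each component. Your observation that the coupling enters only through the frozen bounded multipliers $f_1,f_2$, made well-defined by Assumption~\ref{ass:gsystem}, is precisely the point that makes the modification straightforward.
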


The proof of Proposition~\ref{propo:positivity-scheme-sys} is a straightforward modification of the proof of Proposition~\ref{propo:positivity-scheme}. Moreover, one has the following variant of Proposition~\ref{propo:moment}.
\begin{proposition}\label{propo:moment-sys}
Let Assumption~\ref{ass:gsystem} be satisfied and assume that the initial values $u_{1,0},u_{2,0}$ satisfy Assumptions~\ref{ass:u0} and~\ref{ass:pos}.
Let the sequences $u_{1,0}^{\LT},\ldots,u_{1,M}^{\LT}$ and $u_{2,0}^{\LT},\ldots,u_{2,M}^{\LT}$ be given by the Lie--Trotter splitting scheme~\eqref{eq:scheme-sys}.

For all $\gamma\in(0,\infty)$ and all $T\in(0,\infty)$, there exists $C_{\gamma,T}\in(0,\infty)$ such that for all $\tau=T/M$ and $h=1/N$ satisfying the condition $\tau \le \gamma h$, one has
\begin{equation}\label{eq:momentscheme-sys}
\underset{0\le m\le M}\sup~\underset{1\le n\le N-1}\sup~\E\left[|u_{1,m,n}^{\LT}|^2\right]+\underset{0\le m\le M}\sup~\underset{1\le n\le N-1}\sup~\E\left[|u_{2,m,n}^{\LT}|^2\right]\le C_{\gamma,T}\bigl(1+\|u_{1,0}\|_\infty^2+\|u_{2,0}\|_\infty^2\bigr).
\end{equation}
\end{proposition}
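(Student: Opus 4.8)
The plan is to transcribe the proof of Proposition~\ref{propo:moment} componentwise, handling $u_1^{\LT}$ and $u_2^{\LT}$ in parallel, and to exploit the fact that the coupling between the two equations enters only through the auxiliary factors $f_1$ and $f_2$, which are bounded by virtue of Assumption~\ref{ass:gsystem}. First I would introduce, for each component $i\in\{1,2\}$, a space-time continuous auxiliary process $\bigl(u_i^{\LT}(t,x)\bigr)_{t\in[0,T],x\in[0,1]}$ exactly as in Section~\ref{sec:auxiliaryprocess}, so that $u_i^{\LT}(t_m,x_n)=u_{i,m,n}^{\LT}$ and $u_i^{\LT}$ obeys the mild identity analogous to~\eqref{eq:schemeaux3}, now driven by $W_i$ and with the scalar factor $f$ replaced by $f_i(u_1^{\LT}(\ell^M(s),\kappa^N(y)),u_2^{\LT}(\ell^M(s),\kappa^N(y)))$.

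Next, I would fix $i\in\{1,2\}$ and apply It\^o's isometry to the mild formula for $u_i^{\LT}(t_m,x_n)$. The deterministic term is bounded by $\|u_{i,0}\|_\infty^2$ using the inequality~\eqref{eq:aux1}. The crucial point is that the stochastic term produces an integrand of the form $|G^N|^2\,\E\bigl[|u_i^{\LT}(s,\kappa^N(y))|^2\,f_i(\cdots)^2\bigr]$, and since $f_1,f_2$ are bounded (as noted after Assumption~\ref{ass:gsystem}), say $|f_i|\le C_f$ for $i\in\{1,2\}$, this is dominated by $C_f^2\,\E[|u_i^{\LT}(s,\kappa^N(y))|^2]$, a quantity involving only the $i$-th component. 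Hence, although $f_i$ depends on both $u_1^{\LT}$ and $u_2^{\LT}$, the second-moment inequality for component $i$ feeds back on itself alone. This decoupling is insensitive to whether $W_1$ and $W_2$ are equal or independent, since only the quadratic variation of the single noise $W_i$ enters.

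I would then reproduce the geometric-Brownian-motion growth estimate: from the explicit solution~\eqref{eq:scheme-subsystem1-solution} and the CFL condition $N\tau\le\gamma$, one obtains $\E[|u_i^{\LT}(s,\kappa^N(y))|^2]\le e^{C_f^2\gamma/2}\,\E[|u_{i,k,n}^{\LT}|^2]$ for $s\in[t_k,t_{k+1})$. Combined with the kernel bound~\eqref{eq:aux2}, this yields, for each $i$, an inequality $\sup_n\E[|u_{i,m,n}^{\LT}|^2]\le\|u_{i,0}\|_\infty^2+C_{\gamma,T}\tau\sum_{k=0}^{m-1}(t_m-t_k)^{-1/2}\sup_n\E[|u_{i,k,n}^{\LT}|^2]$, to which the discrete Gr\"onwall inequality~\eqref{eq:gronwall} applies. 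Summing the two resulting bounds over $i\in\{1,2\}$ gives~\eqref{eq:momentscheme-sys}; the additive constant $1$ on the right-hand side is harmless slack. As in the scalar case, Assumption~\ref{ass:pos} plays no role in this moment estimate.

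Since each step is a verbatim adaptation of the single-equation argument, I do not anticipate a genuine obstacle. The only point requiring care is verifying that the coupling does not spoil the separation of the two Gr\"onwall inequalities, i.e.\ that the bound on $\E[|u_i^{\LT}|^2 f_i^2]$ never introduces the other component's moments. This is guaranteed precisely by the structural conditions $g_1(0,v_2)=0$ and $g_2(v_1,0)=0$ in Assumption~\ref{ass:gsystem}, which are exactly what make $f_1=\int_0^1\partial_{v_1}g_1(rv_1,v_2)\,\mathrm dr$ and $f_2=\int_0^1\partial_{v_2}g_2(v_1,rv_2)\,\mathrm dr$ globally bounded.
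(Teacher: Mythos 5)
Your proposal is correct and follows exactly the route the paper intends: the paper omits the proof of Proposition~\ref{propo:moment-sys}, stating that it follows from the same arguments as Proposition~\ref{propo:moment}, and your componentwise transcription is precisely that adaptation. You also correctly identify the two points that need checking — that the boundedness of $f_1,f_2$ (from $g_1(0,v_2)=0$, $g_2(v_1,0)=0$ and global Lipschitz continuity) keeps the two Gr\"onwall inequalities decoupled, and that the tower-property/geometric-Brownian-motion estimate still works because $f_i(u_{1,m,n}^{\LT},u_{2,m,n}^{\LT})$ is $\mathcal{F}_{t_m}$-measurable.
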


Finally, one has the following generalization of Theorem~\ref{theo:main}.
\begin{theorem}\label{theo:main-sys}
Let Assumption~\ref{ass:gsystem} be satisfied and assume that the initial values $u_{1,0},u_{2,0}$ satisfy Assumptions~\ref{ass:u0} and~\ref{ass:pos}. Let the sequences $u_{1,0}^{\LT},\ldots,u_{1,M}^{\LT}$ and $u_{2,0}^{\LT},\ldots,u_{2,M}^{\LT}$ be given by the Lie--Trotter splitting scheme~\eqref{eq:scheme-sys}, and let $\bigl(u_1^{N}(t)\bigr)_{t\ge 0,0\le n\le N}$ and $\bigl(u_2^{N}(t)\bigr)_{t\ge 0,0\le n\le N}$ be given by the spatial semi-discretization scheme~\eqref{eq:spatialscheme-sys}.

For all $\gamma\in(0,\infty)$ and $T\in(0,\infty)$, there exists $C_{\gamma,T}(u_{1,0},u_{2,0})\in(0,\infty)$ such that for all $\tau=T/M$ and $h=1/N$ satisfying the condition $\tau\le \gamma h$, one has
\begin{equation}\label{eq:error-sys}
\begin{aligned}
\underset{0\le m\le M}\sup~\underset{0\le n\le N}\sup~\left(\E[|u_{1,m,n}^{\LT}-u_{1,n}^{N}(t_m)|^2]\right)^{\frac12}&\le C_{\gamma,T}(u_{1,0},u_{2,0})\left(\tau^{\frac{1}{4}}+\left(\frac\tau h\right)^{\frac12}\right)\\
\underset{0\le m\le M}\sup~\underset{0\le n\le N}\sup~\left(\E[|u_{2,m,n}^{\LT}-u_{2,n}^{N}(t_m)|^2]\right)^{\frac12}&\le C_{\gamma,T}(u_{1,0},u_{2,0})\left(\tau^{\frac{1}{4}}+\left(\frac\tau h\right)^{\frac12}\right).
\end{aligned}
\end{equation}
In addition, for all $\tau=T/M$ and $h=1/N$ satisfying the condition $\tau\le \gamma h^2$, one has
\begin{equation}\label{eq:error2-sys}
\begin{aligned}
\underset{0\le m\le M}\sup~\underset{0\le n\le N}\sup~\left(\E[|u_{1,m,n}^{\LT}-u_{1,n}^{N}(t_m)|^2]\right)^{\frac12}&\le C_{\gamma,T}(u_{1,0},u_{2,0})\tau^{\frac{1}{4}}\\
\underset{0\le m\le M}\sup~\underset{0\le n\le N}\sup~\left(\E[|u_{2,m,n}^{\LT}-u_{2,n}^{N}(t_m)|^2]\right)^{\frac12}&\le C_{\gamma,T}(u_{1,0},u_{2,0})\tau^{\frac{1}{4}}.
\end{aligned}
\end{equation}
\end{theorem}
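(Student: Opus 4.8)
The plan is to mimic the proof of Theorem~\ref{theo:main} component by component, the only genuinely new feature being the coupling of the two equations through the nonlinearities $g_1,g_2$, which each depend on both $u_1$ and $u_2$. First I would record the system analogues of the auxiliary ingredients used in the scalar case: the moment bounds of Proposition~\ref{propo:moment-sys}, the temporal mean-square regularity of the spatial semi-discretizations $u_1^N,u_2^N$ (the estimate~\eqref{eq:regul-uN} applied to each component of~\eqref{eq:spatialscheme-sys}), and the temporal regularity of the auxiliary processes $u_1^{\LT},u_2^{\LT}$, i.e.\ the exact analogue of Lemma~\ref{lem:regul-uLT}, obtained from Proposition~\ref{propo:moment-sys} by the same It\^o-isometry argument applied to the first splitting substep of each component. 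I would also introduce the space-time continuous auxiliary processes $u_i^{\LT}(t,x)$ as in Section~\ref{sec:auxiliaryprocess}, now with the frozen coefficients $f_i(u_{1,m,n}^{\LT},u_{2,m,n}^{\LT})$.

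Next I would set, for $i\in\{1,2\}$, $E_{i,m,n}=u_i^N(t_m,x_n)-u_{i,m,n}^{\LT}$, and work with the \emph{combined} quantity
\[
E_m=\underset{1\le n\le N-1}\sup~\E[|E_{1,m,n}|^2]+\underset{1\le n\le N-1}\sup~\E[|E_{2,m,n}|^2].
\]
For each $i$ I would reproduce verbatim the decomposition $E_{i,m,n}=E_{i,m,n}^{(1)}+E_{i,m,n}^{(2)}$ and the further splitting $E_{i,m,n}^{(1)}=E_{i,m,n}^{(1,1)}+E_{i,m,n}^{(1,2)}+E_{i,m,n}^{(1,3)}$, using $g_1(v_1,v_2)=v_1f_1(v_1,v_2)$ and $g_2(v_1,v_2)=v_2f_2(v_1,v_2)$. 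The terms $E_{i,m,n}^{(1,1)}$, $E_{i,m,n}^{(1,3)}$ and $E_{i,m,n}^{(2)}$ are controlled by $C_{\gamma,T}(u_{1,0},u_{2,0})(\sqrt\tau+\tau h^{-1})$ exactly as in the scalar proof, using It\^o's isometry, the boundedness of $f_1,f_2$, the regularity estimates listed above, and the auxiliary inequalities~\eqref{eq:aux2} and~\eqref{eq:aux3}.

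The essential new point is the term $E_{i,m,n}^{(1,2)}$, where the Lipschitz continuity of the nonlinearity is invoked. Because $g_1$ and $g_2$ depend on both components, one has, for instance,
\[
|g_1(u_1^N,u_2^N)-g_1(u_1^{\LT},u_2^{\LT})|\le L_{g_1}\bigl(|u_1^N-u_1^{\LT}|+|u_2^N-u_2^{\LT}|\bigr),
\]
so the bound on $\E[|E_{1,m,n}^{(1,2)}|^2]$ involves \emph{both} $E_{1,k}$ and $E_{2,k}$, hence the combined error $E_k$. This is precisely why the two components must be controlled simultaneously through the single functional $E_m$: the Gr\"onwall argument does not close on either component alone. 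Carrying out the same computation as for $E_{m,n}^{(1,2)}$ in the scalar case, in particular the estimate $\int_{t_k}^{t_{k+1}}\int_0^1 G^N(t_m-s,x,y)^2\,\text dy\,\text ds\le 2C_T\tau/\sqrt{t_m-t_k}$, yields $\E[|E_{i,m,n}^{(1,2)}|^2]\le C_T\tau\sum_{k=0}^{m-1}E_k/\sqrt{t_m-t_k}$ for both $i$.

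Summing the two components and gathering all estimates gives
\[
E_m\le C_{\gamma,T}(u_{1,0},u_{2,0})\bigl(\sqrt\tau+\tau h^{-1}\bigr)+C_T\tau\sum_{k=0}^{m-1}\frac{E_k}{\sqrt{t_m-t_k}},
\]
and the discrete Gr\"onwall inequality~\eqref{eq:gronwall} then yields $\sup_{0\le m\le M}E_m\le C_{\gamma,T}(u_{1,0},u_{2,0})(\sqrt\tau+\tau/h)$. Taking square roots and using the subadditivity of the square root gives~\eqref{eq:error-sys}; the bounds extend from $1\le n\le N-1$ to $0\le n\le N$ since both processes vanish at $n\in\{0,N\}$ by the homogeneous Dirichlet conditions. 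The estimate~\eqref{eq:error2-sys} follows at once under $\tau\le\gamma h^2$, since then $\tau h^{-1}\le\sqrt\gamma\,\tau^{1/2}$, exactly as in the scalar case. I expect no serious analytic obstacle: the only conceptual step beyond a routine component-wise repetition of the proof of Theorem~\ref{theo:main} is the bookkeeping forced by the coupling, namely treating $(E_{1,m},E_{2,m})$ as a single vector error so that the cross-dependence through $g_1,g_2$ is absorbed into one discrete Gr\"onwall inequality. The only mildly technical preliminary is verifying the system version of Lemma~\ref{lem:regul-uLT}, but this is immediate from Proposition~\ref{propo:moment-sys}.
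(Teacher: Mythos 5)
Your proposal is correct and matches the paper's intent exactly: the paper omits the proof of Theorem~\ref{theo:main-sys}, stating only that it follows from the same arguments as Theorem~\ref{theo:main}, and your write-up is a faithful elaboration of precisely that component-wise repetition. The one genuinely non-routine point — that the coupling through $g_1,g_2$ forces the discrete Gr\"onwall argument to be run on the combined error $E_m=\sup_n\E[|E_{1,m,n}|^2]+\sup_n\E[|E_{2,m,n}|^2]$ rather than on each component separately — is correctly identified and handled.
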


The proofs of Proposition~\ref{propo:moment-sys} and of Theorem~\ref{theo:main-sys} are omitted since they follow from the same arguments as those of Proposition~\ref{propo:moment} and of Theorem~\ref{theo:main}. Finally, one obtains the following variant of Corollary~\ref{cor:errorfull}

\begin{corollary}\label{cor:errorfull-sys}
Consider the setting and assumptions of Theorem~\ref{theo:main-sys}.
For all $\gamma\in(0,\infty)$ and $T\in(0,\infty)$, there exists $C_{\gamma,T}(u_{1,0},u_{2,0})\in(0,\infty)$ such that for all $\tau=T/M$ and $h=1/N$ satisfying the condition $\tau\le \gamma h^2$, one has
\begin{equation}\label{eq:errorfull-sys}
\begin{aligned}
\underset{0\le m\le M}\sup~\underset{0\le n\le N}\sup~\left(\E[|u_{1,m,n}^{\LT}-u_1(t_m,x_n)|^2]\right)^{\frac12}&\le C_{\gamma,T}(u_{1,0},u_{2,0})h^{\frac{1}{2}}\\
\underset{0\le m\le M}\sup~\underset{0\le n\le N}\sup~\left(\E[|u_{2,m,n}^{\LT}-u_2(t_m,x_n)|^2]\right)^{\frac12}&\le C_{\gamma,T}(u_{1,0},u_{2,0})h^{\frac{1}{2}}.
\end{aligned}
\end{equation}
\end{corollary}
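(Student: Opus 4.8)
The plan is to follow verbatim the structure of the proof of Corollary~\ref{cor:errorfull}, now applied to each of the two components separately. For fixed $i\in\{1,2\}$, $m\in\{0,\ldots,M\}$ and $n\in\{0,\ldots,N\}$, I would first split the full error by the triangle inequality in $L^2(\Omega)$,
\[
\bigl(\E[|u_{i,m,n}^{\LT}-u_i(t_m,x_n)|^2]\bigr)^{\frac12}\le \bigl(\E[|u_{i,m,n}^{\LT}-u_i^N(t_m,x_n)|^2]\bigr)^{\frac12}+\bigl(\E[|u_i^N(t_m,x_n)-u_i(t_m,x_n)|^2]\bigr)^{\frac12},
\]
thereby isolating a temporal (splitting) error and a spatial (finite difference) error for the $i$-th component.

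The temporal error is controlled directly by Theorem~\ref{theo:main-sys}: under the condition $\tau\le\gamma h$ the estimate~\eqref{eq:error-sys} bounds it by $C_{\gamma,T}(u_{1,0},u_{2,0})\bigl(\tau^{1/4}+(\tau/h)^{1/2}\bigr)$ for both $i=1$ and $i=2$.

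The spatial error requires an analogue of Proposition~\ref{propo:error-spatial} for the coupled system~\eqref{eq:spatialscheme-sys}. This is the only genuinely new ingredient, since the scalar corollary simply quoted the result of \cite{MR1644183}. I would establish it by the same mild-solution argument as in \cite{MR1644183}: writing the spatial errors $e_i^N(t,x)=u_i^N(t,x)-u_i(t,x)$ in mild form, applying It\^o's isometry together with the auxiliary inequality~\eqref{eq:aux2}, and using that $g_1,g_2$ are jointly globally Lipschitz in $(v_1,v_2)$ by Assumption~\ref{ass:gsystem}. The joint Lipschitz property couples the two equations, so I would run a single discrete Gr\"onwall argument~\eqref{eq:gronwall} on the sum $\sup_x\E[|e_1^N(t,x)|^2]+\sup_x\E[|e_2^N(t,x)|^2]$ rather than on each error individually; this closes the estimate and yields $\sup_{t}\sup_x\bigl(\E[|e_i^N(t,x)|^2]\bigr)^{1/2}\le C_T(u_{1,0},u_{2,0})h^{1/2}$ for $i=1,2$.

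Finally, I would substitute the CFL condition $\tau\le\gamma h^2$ into the temporal bound, using $\tau^{1/4}\le\gamma^{1/4}h^{1/2}$ and $(\tau/h)^{1/2}\le\gamma^{1/2}h^{1/2}$, so that both the temporal and spatial contributions are of order $h^{1/2}$. Adding the two bounds gives~\eqref{eq:errorfull-sys}. The main obstacle is the coupled spatial error estimate just described, as it is the one step not reducible to the scalar theory; everything else is an exact transcription of the proof of Corollary~\ref{cor:errorfull}.
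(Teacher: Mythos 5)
Your proposal is correct and follows essentially the same route as the paper, which proves the scalar Corollary~\ref{cor:errorfull} by a triangle inequality combining the temporal estimate of Theorem~\ref{theo:main} with the spatial estimate of Proposition~\ref{propo:error-spatial}, and simply asserts the system variant without further proof. Your one genuine addition is to make explicit, and sketch correctly, the system analogue of the spatial convergence result (mild formulation, It\^o isometry with~\eqref{eq:aux2}, joint Lipschitz continuity from Assumption~\ref{ass:gsystem}, and a discrete Gr\"onwall argument on the sum of the two componentwise errors), which the paper leaves implicit.
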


To conclude this presentation of the positivity-preserving Lie--Trotter splitting scheme~\eqref{eq:scheme-sys} for the approximation of solutions of the SPDE system~\eqref{eq:spde-sys}, we report some numerical experiments.

The first numerical experiment illustrates the positivity-preserving property of the Lie--Trotter splitting scheme (LT)
when applied to the system of SPDEs~\eqref{eq:spde-sys} driven by two independent noise.
The initial values are taken to be $u_{1,0}=u_{2,0}=\sin(\pi x)$, the final time is $T=5$ and the multiplicative terms
are $g_1(v_1,v_2)=7\sin(v_1)\cos(v_2)$ and $g_2(v_1,v_2)=7\cos(v_1)\sin(v_2)$.
The discretization parameters are $\tau=2^{-2}$ and $h=2^{-8}$.
The proportion of samples containing only positive values out of $500$ simulated samples for all considered time integrators
are presented in Table~\ref{tablesys}.

\begin{table}[h]
\begin{center}
\begin{tabular}{| c c c c|}
 \hline
 LT (first,second) & SEXP (first,second) & SEM (first,second) & EM (first,second)\\
 \specialrule{.1em}{.05em}{.05em}
 $500/500, 500/500$ & $500/500, 499/500$ & $498/500, 496/500$ & $0/500, 0/500$ \\
 \hline
\end{tabular}
\end{center}
\caption{Proportion of samples containing only positive values out of $500$ simulated sample paths for
the Lie--Trotter splitting scheme (LT), the stochastic exponential Euler integrator (SEXP),
the semi-implicit Euler--Maruyama scheme (SEM), and the Euler--Maruyama scheme (EM).
First and second component. The multiplicative terms are $g_1(v_1,v_2)=7\sin(v_1)\cos(v_2)$ and $g_2(v_1,v_2)=7\cos(v_1)\sin(v_2)$.
The discretization parameters are $\tau=2^{-2}$ and $h=2^{-8}$.}\label{tablesys}
\end{table}

The second numerical experiment illustrates the mean-square convergence of the Lie--Trotter splitting scheme when applied to systems of nonlinear SHEs.
Figure~\ref{fig:sys} presents, in a loglog plot, the mean-square errors measured at the space-time grid for the time interval $[0,0.5]$.
The discretization parameters are $h=2^{-8}$ and $\tau=2^{-4},2^{-5},\ldots,2^{-16}$ (the last one being used for the reference solution).
We have used $200$ samples to approximate the expected values. The expected mean-square orders of convergence is observed in this figure.

\begin{figure}[h]
  \centering
  \includegraphics[width=.45\textwidth]{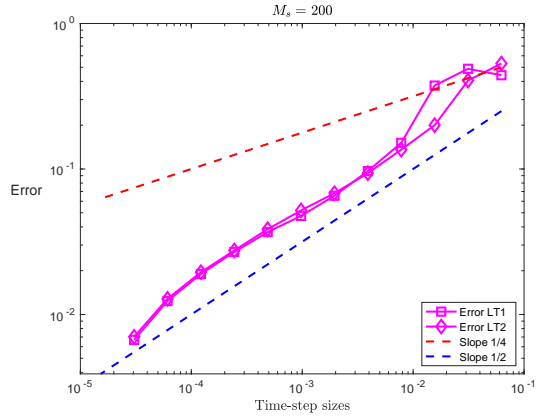}
\caption{Mean-square errors of the Lie--Trotter splitting scheme (first component denoted by LT1, second by LT2)
when applied to the system of stochastic heat equations
with multiplicative terms $g_1(v_1,v_2)=\sin(v_1)\cos(v_2)$ and $g_2(v_1,v_2)=\cos(v_1)\sin(v_2)$.
Mesh size $h=2^{-8}$ and average over $200$ samples.}
\label{fig:sys}
\end{figure}

\begin{appendix}
\section{Proof of auxiliary inequalities}\label{sec:app}

\begin{proof}[Proof of the auxiliary inequality~\eqref{eq:aux3}]

Let us recall some notation. For all $N\in\N$, all $t\ge 0$ and $x,y\in[0,1]$, one has
\[
G^N(t,x,y)=\sum_{j=1}^{N-1}e^{-\lambda_j^N t}\varphi_j^{N}(x)\varphi_j(\kappa^N(y)),
\]
where $\lambda_j^N=4N^2\sin\bigl(\frac{j\pi}{2N}\bigr)^2$, $\varphi_j(\cdot)=\sqrt{2}\sin(j\pi\cdot)$ and $\varphi_j^{N}$ is the linear interpolation of $\varphi_j$ at the space grid points $x_n=nh$ for $n = 1, \ldots, N-1$.

Using the orthogonality property
\[
\int_0^1 \varphi_j(\kappa^N(y))\varphi_k(\kappa^N(y))\,\text dy=\delta_{jk},
\]
one obtains
\begin{align*}
\int_0^{t}\int_0^1\big|G^N(t-s,x,y)&-G^N(t-\ell^M(s),x,y)\big|^2\,\text dy\,\text ds\\
&=\int_{0}^{t}\int_0^1\big|\sum_{j=1}^{N-1}\bigl(e^{-\lambda_j^N(t-s)}-e^{-\lambda_j^N(t-\ell^M(s))}\bigr)\varphi_j^{N}(x)\varphi_j(\kappa^N(y))\big|^2\,\text dy\,\text ds\\
&=\int_{0}^{t}\sum_{j=1}^{N-1}\bigl(e^{-\lambda_j^N(t-s)}-e^{-\lambda_j^N(t-\ell^M(s))}\bigr)^2\varphi_j^{N}(x)^2\,\text ds\\
&\le 2\int_{0}^{t}\sum_{j=1}^{N-1}\bigl(e^{-\lambda_j^N(t-s)}-e^{-\lambda_j^N(t-\ell^M(s))}\bigr)^2\,\text ds\\
&\le 2\sum_{j=1}^{N-1}\int_{0}^{t}e^{-2\lambda_j^N(t-s)}\bigl(1-e^{-\lambda_j^N(s-\ell^M(s))}\bigr)^2\,\text ds\\
&\le C\sum_{j=1}^{N-1}\frac{\max(1,\lambda_j^N\tau)^2}{\lambda_j^N}.
\end{align*}
One checks that there exists $c\in(1,\infty)$ such that for all $N\ge 1$ and $j\in\{1,\ldots,N-1\}$ one has
\[
c^{-1}\le \frac{\lambda_j^N}{j^2}\le c.
\]
Let $L\in\N$ be an arbitrary positive integer. Owing to the inequalities above, one obtains
\begin{align*}
\sum_{j=1}^{N-1}\frac{\max(1,\lambda_j^N\tau)^2}{\lambda_j^N}&\le C\sum_{j=1}^{\infty}\frac{\max(1,j^2\tau)^2}{j^2}\\
&\le C\sum_{j=1}^{L}j^2\tau^2+C\sum_{j=L+1}^{\infty}j^{-2}\\
&\le C\tau^2L^3+CL^{-1},
\end{align*}
using standard comparison of series and integrals arguments. Choosing $L=\lfloor \tau^{-\frac12}\rfloor\ge 1$ (where $\lfloor \cdot\rfloor$ denotes the integer part), and recalling that $\tau\in(0,1)$, one obtains
\[
\int_0^{t}\int_0^1\big|G^N(t-s,x,y)-G^N(t-\ell^M(s),x,y)\big|^2\,\text dy\,\text ds\le C\tau^{\frac12}.
\]
The value of $C$ is independent of $N\in\N$, $t\in(0,T]$ and $x\in[0,1]$. The proof of the auxiliary inequality~\eqref{eq:aux3} is thus completed.
\end{proof}

\end{appendix}

\section*{Acknowledgements}
The work of CEB is partially supported by the project SIMALIN (ANR-19-CE40-0016) operated by the French National Research Agency.
The work of DC and JU is partially supported by the Swedish Research Council (VR) (projects nr. $2018-04443$).
The computations were performed on resources provided by the Swedish National Infrastructure
for Computing (SNIC) at HPC2N, Ume{\aa} University and at UPPMAX, Uppsala University.

\bibliographystyle{abbrv}
\bibliography{labib}

\end{document}